\newtheorem{thm}{Theorem}
\newtheorem{lem}{Lemma}
\newtheorem{cor}{Corollary}
\newtheorem{defn}{Definition}
\newcommand{\diam}{diam}
\newcommand{\Z}{{\Z B}}
\let\oldenumerate\enumerate
\renewcommand{\enumerate}{
	\oldenumerate
	\setlength{\itemsep}{0pt}
	\setlength{\parskip}{0pt}
	\setlength{\parsep}{0pt}
}
\def\vertex(#1){\put(#1){\circle*{2}}}
\def\vertexo(#1){\put(#1){\circle{2}}}
\def\vert(#1){\put(#1){\circle*{1.5}}}
\def\verto(#1){\put(#1){\circle{1.5}}}
\def\lab(#1)#2{\put(#1){\makebox(0,0)[c]{#2}}}
\begin{document}

	\title{On the generator graph of a {cyclic} group}
	
	\author{Zekhaya B. Shozi $^{*,1}$ and Teresa L. Tacbobo $^{2}$}
	
	\address{$^{1}$ School of Mathematics, Statistics \& Computer Science, University of KwaZulu-Natal, Private Bag X54001, Durban 4000, South Africa. \newline
		\indent \small {\tt Email: shoziz1@ukzn.ac.za} \newline
		\indent \small {\tt Email: zekhaya@aims.ac.za} 
	}
	
	\address{$^{2}$ Mathematics Department, College of Arts and Sciences, Bukidnon State University, Malaybalay City, Bukidnon, Philippines. \newline
		\indent \small {\tt Email: teresatacbobo@buksu.edu.ph} \newline
		\indent \small {\tt Email:  teresa\_tacbobo@yahoo.com} 
	}

	\subjclass[2010]{05C69}
	
	\keywords {Graph; Group; Generator graph; Topological indices; Metric dimension}
    \thanks{$^1$ Research supported by University of KwaZulu-Natal's College of Agriculture, Engineering \& Science start-up funding for early-career researchers.}
    \thanks{$^2$ The authors gratefully acknowledge the support of the Center for Mathematical Innovations and the Research and Development Unit of Bukidnon State University.}
	
	\dedicatory{}
	\maketitle
	
	\begin{abstract}

        In this paper, we continue the study of the generator graph of a group. In 2023, Tacbobo \cite{tacbobo2023generator} defined the generator graph of a nontrivial group to be the graph whose vertices are the elements of the group, with two vertices being adjacent if at least one of them is a generator of the group. {Building on the properties} established in \cite{tacbobo2023generator}, we prove that the diameter of the generator graph of a cyclic group is at most $2$. Furthermore, we present explicit formulas for some topological indices of the generator  graph of a cyclic group with $n\ge 2$ elements and whose set of generators is $S$, {expressed} in terms of $n$ and $|S|$. Lastly, we determine the metric dimension of the generator graph of a nontrivial cyclic group {as a function of its order $n$}.

	\end{abstract}

	
	\section{Introduction}

        In graph theory, a graph can be thought of as any collection of objects some of which are related. The objects are called vertices (or nodes) and are represented by points. Two vertices are related if they are joined by a line which we call an edge. Graph theory has numerous applications in various fields, such as chemistry. For instance, in chemistry, graphs are used to study chemical molecules, where each vertex represents an atom and an edge between two vertices represents the chemical bond between the corresponding atoms.

        A group, on the other hand, can be defined as a set with an associative binary operation, a unique identity element, and unique inverses with respect to that identity element. For example, the set of integers forms a group with respect to addition. Group theory is also applicable to chemistry, especially in the study of the group of symmetries of crystals and molecules. To learn more concepts on group theory and graph theory, we refer the reader to \cite{fraleigh2003first} and \cite{henning2013total}, respectively.

        The study of groups in terms of graphs (or vice versa) have been explored by several researchers over the years. The authors in \cite{vasantha2009groups} introduced the concept of identity graph. Let $(G, \ast)$ be a group with identity element $e$, and let $\Gamma$ be the identity graph of $G$. Then the vertices of $\Gamma$ are the elements of $G$, and two vertices $x$ and $y$ of $\Gamma$ are adjacent if and only if $x \ast y = e$. The notion of the generating graph of a group was introduced in \cite{lucchini2017generating}. The generating graph $\Gamma$ of a group $G$ is a graph whose vertices are the elements of $G$, and two vertices $x$ and $y$ in $\Gamma$ if the subgroup generated by $x$ and $y$ is $G$. Erdem \cite{erdem2018generating} studied the generating graphs of symmetric groups and alternating groups, and proved the lower bound on the number of elements of a group whose generating graph is Hamiltonian. Tacbobo \cite{tacbobo2023generator} introduced the concept of the generator graph of a group. The generator graph $\Gamma(G)$ of a group $G$ is a graph whose vertex set consists of the elements of $G$, and two vertices $x$ and $y$ of $\Gamma(G)$ are adjacent if at least one of them generates $G$. Several properties of the generator graph were presented in \cite{tacbobo2023generator}. For example, the degree of each vertex in the generator graph as well as the size of the generator graph was proven in \cite{tacbobo2023generator}. Furthermore, it was shown that the generator graph of a group $G$ is the join of the complete graph whose order is the number of generators of $G$ and the null graph whose order is the difference between the number of elements of $G$ and the number of generators of $G$.

        Our aim in this paper is to provide a few more properties of the generator graph. In particular, we show that the generator graph of a group has diameter at most $2$, and that its maximum degree is at least half the number of elements of the group. Furthermore, we present explicit formulas for some topological indices of the generator graph of a group. More specifically, we present explicit formulas for the Wiener index, Gutman index, Harmonic index, Randić index and Sombor index of the generator graph of a group. We also find an explicit formula for the metric dimension of the generator graph of a group.

        The rest of the paper is structured as follows. Section \ref{preliminary} is the preliminary section where we provide and elementary review on groups and graphs. In Section \ref{Sec:some-properties-of-graphs}, we present some more properties of the generator graph of a cyclic group using the properties established in \cite{tacbobo2023generator}. Explicit formulas for some topological indices of the generator graph of a cyclic group are presented in Section \ref{Sec:some-topological-indices-of-the-generator-graph}. Lastly, in Section \ref{Sec:the-metric-dimension-of-the-generator-graph}, we present the explicit formula for the metric dimension of the generator graph of a cyclic group.

	\section{Preliminaries}
	\label{preliminary}
	
	\subsection{Basic review of groups}
		
		All group theory terminologies and notations in this work conform to the conventions established in \cite{fraleigh2003first}.
		A \emph{group} is a non-empty set $G$ together with a binary operation $\ast : G \times G \to G$ satisfying the following axioms:
		
		\begin{enumerate}
			\item Closure: For all $a, b \in G$, $a \ast b \in G$.
			\item Associativity: For all $a, b, c \in G$, $(a \ast b) \ast c = a \ast (b \ast c)$.
			\item Identity element: There exists an element $e \in G$ such that $a \ast e = e \ast a = a$ for all $a \in G$.
			\item Inverse element: For every $a \in G$, there exists $a^{-1} \in G$ such that $a \ast a^{-1} = a^{-1} \ast a = e$.
		\end{enumerate}
		
		A group $(G, \ast)$ is called an \emph{Abelian group} if, in addition to the group axioms, the binary operation is commutative:
		\[ a \ast b = b \ast a \quad \text{for all } a, b \in G.\]
		
		A \emph{subgroup} of group $G$ is a non-empty subset $H \subseteq G$ such that $H$ itself forms a group under the operation $\cdot$ restricted to $H$. Equivalently, $H$ is a subgroup if: (a) the identity element $e$ of $G$ belongs to $H$; (b) for all $a, b \in H$, $a \ast b \in H$ (\textit{closure}); and (c) for all $a \in H$, $a^{-1} \in H$ (\textit{existence of inverses}).The \emph{trivial subgroup} of $G$ is $\{ e \}$, which contains only the identity element.
		The \emph{improper subgroup} of $G$ is $G$ itself. A \emph{proper subgroup} of $G$ is any subgroup $H$ such that $H \neq G$ and $H \neq \{ e \}$.
		
		Let $G$ be a group and $g \in G$. The element $g$ is called a \emph{generator} of a subgroup $H \leq G$ if
		$H = \langle g \rangle = \{ g^n \mid n \in \mathbb{Z}$ (multiplicative notation) or
		$H=\langle g \rangle=\{ng\mid n \in \mathbb{Z}\}$ (additive notation). That is, every element of $H$ can be expressed as an integral power (or multiple) of $g$. A group $G$ is called a \emph{cyclic group} if there exists an element $g \in G$ such that 
		$G = \langle g \rangle$. In this case, $g$ is called a \emph{generator} of $G$. If $G$ is finite, the order of $G$ is equal to the order of $g$. If $G$ is infinite, it is isomorphic to $(\mathbb{Z}, +)$ in the additive case, or to the infinite cyclic group in the multiplicative case.

	\subsection{Basic review of graphs}
	
		For graph theory terminology and notation, we generally follow \cite{henning2013total}. A \emph{simple} and \emph{undirected graph} $\Gamma$ is an ordered pair $(V(\Gamma), E(\Gamma))$, where $V(\Gamma)$ is a finite nonempty set of objects called \emph{vertices} and $E(\Gamma)$ is a (possibly empty) set of $2$-element subsets of $V(\Gamma)$ called \emph{edges}. The number of vertices in a graph $\Gamma$ is the \emph{order} of $\Gamma$, and the number of edges in $\Gamma$ is the \emph{size} of $\Gamma$. If $\{u,v\} \in E(\Gamma)$, then we say $u$ and $v$ are \emph{adjacent} or are \emph{neighbours} in $\Gamma$. For simplicity, we will often write $uv$ instead of $\{u,v\}$ to represent the edge that joins the vertex $u$ and the vertex $v$ in $\Gamma$.
        
        Let $\Gamma$ be a graph with vertex set $V(\Gamma)$, edge set $E(\Gamma)$, order $n = |V(\Gamma)|$ and size $m = |E(\Gamma)|$. The \emph{open neighbourhood} of a vertex $v$ in $\Gamma$ is $N_\Gamma(v) = \{ u \in V(\Gamma)  \mid uv \in E(\Gamma)\}$, while its \emph{closed neighbourhood} is $N_\Gamma[v] = \{v\} \cup N_\Gamma(v)$. The \emph{degree} of a vertex $v$ in $\Gamma$, denoted $\deg_\Gamma (v)$, is the number of neighbours of $v$ in $\Gamma$; that is, $\deg_\Gamma (v) = |N_\Gamma (v)|$. The maximum degree in $\Gamma$ is denoted by $\Delta(\Gamma).$ 
		
		A \emph{complete graph} is a graph all whose vertices are pairwise adjacent. A complete graph of order $n$ is denoted by $K_n$.   The complement of a graph $\Gamma$, denoted $\overline{\Gamma}$, is defined as $$\overline{\Gamma}=(V(\Gamma),\{uv \mid u,v\in V(\Gamma), u\neq v \text{ and }uv\notin E(\Gamma)\}).$$
		A \emph{null graph} is a graph consisting of no edges. A null graph of order $n$ is denoted by $\overline{K_n}$. Let $\Gamma_1$ and $\Gamma_2$ be any two graphs. We define the \emph{join} of $\Gamma_1$ and $\Gamma_2$ to be the graph
		$$
		\Gamma_1\vee \Gamma_2=(V(\Gamma_1)\cup V(\Gamma_2), E(\Gamma_1)\cup E(\Gamma_2)\cup\{uv \mid u\in V(\Gamma_1)\text{ and } v\in V(\Gamma_2)\}).
		$$
        
		The \emph{distance}, $d_\Gamma (u,v)$, between two vertices $u$ and $v$ of a connected graph $\Gamma$ is the minimum number of edges in a path that connects $u$ and $v$ in $\Gamma$. The \emph{diameter} of $\Gamma$, denoted $\diam(\Gamma)$, is the distance between two vertices that are farthest from each other in $\Gamma$; that is, $\diam(\Gamma) = \max \{ d_\Gamma(u,v) \mid u,v \in V(\Gamma)\}$.
		

	\section{Some Properties of the Generator Graph}
	\label{Sec:some-properties-of-graphs}

	Several interesting properties of the generator graph of a cyclic group were established by Tacbobo in \cite{tacbobo2023generator}.   In this section, we use some of the known properties of the generator graph to establish, as corollaries, other properties of the generator graph. 

        \begin{thm}[\cite{tacbobo2023generator}]
		\label{thm:degree-of-a-vertex-in-gg}
		Let $G$ be a nontrivial group and $S$ the set of all generators of $G$. If $x \in V(\Gamma(G))$, then
		\begin{align*}
			\deg_{\Gamma(G)} (x) = \begin{cases}
				n-1 & \text{ if } x \in S,\\
				|S| & \text{ if } x \notin S.
			\end{cases}
		\end{align*}
	\end{thm}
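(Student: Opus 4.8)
The plan is to argue directly from the adjacency rule defining $\Gamma(G)$, splitting into the two cases $x\in S$ and $x\notin S$. Throughout, write $n=|G|=|V(\Gamma(G))|$ and recall the definition: two \emph{distinct} vertices $y$ and $z$ of $\Gamma(G)$ are joined by an edge precisely when $y$ generates $G$ or $z$ generates $G$, i.e.\ when $y\in S$ or $z\in S$.

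First suppose $x\in S$, so $x$ is a generator of $G$. Then for every vertex $y\neq x$ the pair $\{x,y\}$ already contains the generator $x$, hence $xy\in E(\Gamma(G))$. Thus $x$ is adjacent to each of the remaining $n-1$ vertices of $\Gamma(G)$, and since $\Gamma(G)$ is a simple graph (there is no loop at $x$) we conclude $\deg_{\Gamma(G)}(x)=n-1$.

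Next suppose $x\notin S$, so $x$ is not a generator of $G$. For a vertex $y\neq x$, the edge $xy$ is present if and only if at least one of $x,y$ generates $G$; since $x$ does not, this occurs exactly when $y\in S$. Moreover $x\notin S$, so no element of $S$ coincides with $x$, and therefore $N_{\Gamma(G)}(x)=S$, giving $\deg_{\Gamma(G)}(x)=|S|$.

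I do not expect any genuine obstacle: the statement follows by simply unwinding the definition of $\Gamma(G)$. The only points deserving a word of care are that $\Gamma(G)$ is simple, so a vertex is never counted among its own neighbours, and the degenerate possibility $S=\emptyset$ (which happens exactly when $G$ is noncyclic). In that degenerate case the first case is vacuous, and the second case correctly yields $\deg_{\Gamma(G)}(x)=0$ for every vertex $x$, consistent with $\Gamma(G)$ being the null graph $\overline{K_n}$.
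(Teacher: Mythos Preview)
Your proof is correct and is exactly the natural argument: unwind the adjacency rule of $\Gamma(G)$ in the two cases $x\in S$ and $x\notin S$. Note that the paper does not supply its own proof of this theorem; it is quoted from \cite{tacbobo2023generator} as a known result, so there is no alternative approach to compare against.
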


    From Theorem \ref{thm:degree-of-a-vertex-in-gg}, we have the following corollary.
	
	\begin{cor}
		\label{cor:degree-bounds}
		Let $n \ge 3$ and $\Gamma(\mathbb{Z}_n)$ be the generator graph of $\mathbb{Z}_n$. If $x \in V(\Gamma(\mathbb{Z}_n))$, then
		\begin{align*}
			2\le \deg_{\Gamma(\mathbb{Z}_n)} (x) \le n-1.
		\end{align*}
	\end{cor}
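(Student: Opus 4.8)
The plan is to apply Theorem~\ref{thm:degree-of-a-vertex-in-gg} directly with $G=\mathbb{Z}_n$ and $S$ its set of generators, and then to bound $|S|$. First I would recall that an element $k\in\mathbb{Z}_n$ is a generator precisely when $\gcd(k,n)=1$, so $|S|=\varphi(n)$, the number of integers in $\{1,\dots,n-1\}$ that are coprime to $n$. By Theorem~\ref{thm:degree-of-a-vertex-in-gg}, every vertex $x$ of $\Gamma(\mathbb{Z}_n)$ satisfies $\deg_{\Gamma(\mathbb{Z}_n)}(x)=n-1$ if $x\in S$ and $\deg_{\Gamma(\mathbb{Z}_n)}(x)=|S|=\varphi(n)$ if $x\notin S$. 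Hence it suffices to verify the two chains of inequalities $2\le n-1\le n-1$ and $2\le\varphi(n)\le n-1$ whenever $n\ge 3$.

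For the case $x\in S$ the bounds are immediate: $n\ge 3$ gives $n-1\ge 2$, and $n-1\le n-1$ trivially. For the case $x\notin S$, the upper bound $\varphi(n)\le n-1$ holds because the identity $0$ satisfies $\langle 0\rangle=\{0\}\ne\mathbb{Z}_n$, so $0\notin S$ and therefore $S\subseteq\{1,\dots,n-1\}$. The only point requiring a short argument is the lower bound $\varphi(n)\ge 2$ for $n\ge 3$: here I would exhibit two distinct generators, namely $1$ and $n-1$, observing that $\gcd(1,n)=1$, that $\gcd(n-1,n)=\gcd(-1,n)=1$, and that $1\ne n-1$ since $n\ge 3$. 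This gives $|S|\ge 2$ and completes the case $x\notin S$.

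I do not anticipate any real obstacle: the statement is a direct corollary of Theorem~\ref{thm:degree-of-a-vertex-in-gg}, and the only step that is not a one-line deduction is the remark that $\{1,n-1\}$ provides two distinct generators of $\mathbb{Z}_n$ when $n\ge 3$, forcing $|S|\ge 2$. Alternatively, one could simply invoke the standard fact that $\varphi(n)=1$ only for $n\in\{1,2\}$ to obtain $\varphi(n)\ge 2$ for all $n\ge 3$.
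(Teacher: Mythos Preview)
Your proposal is correct and follows essentially the same approach as the paper: both invoke Theorem~\ref{thm:degree-of-a-vertex-in-gg} and then establish $|S|\ge 2$ by exhibiting $\{1,n-1\}$ as two distinct generators of $\mathbb{Z}_n$ when $n\ge 3$. The only cosmetic differences are that you phrase things via $\varphi(n)$ and the $\gcd$ characterization while the paper uses the observation that generators come in inverse pairs, and you are slightly more explicit about the upper bound in the $x\notin S$ case.
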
 
	
	\begin{proof}
		Let $n \ge 3$ and $\Gamma(\mathbb{Z}_n)$ be the generator graph of $\mathbb{Z}_n$. Let $S$ be the set of all generators of $\mathbb{Z}_n$. The upper bound $\deg_{\Gamma(\mathbb{Z}_n)} (x) \le n-1$ follows from Theorem \ref{thm:degree-of-a-vertex-in-gg} that if $x \in S$, then $\deg_{\Gamma(\mathbb{Z}_n)} (x) = n-1$. It remains for us to show that $\deg_{\Gamma(\mathbb{Z}_n)} (x) \ge 2$. From Theorem \ref{thm:degree-of-a-vertex-in-gg}, if $x \notin S$, then $\deg_{\Gamma(\mathbb{Z}_n)} (x) = |S|$. For a finite additive group $\mathbb{Z}_n$, $\langle x \rangle = \mathbb{Z}_n $ if and only if $\langle x^{-1}\rangle =\mathbb{Z}_n $ for every $x\in\mathbb{Z}_n $. Thus, $\{x, x^{-1}\} \subseteq S$. { When $n \ge 3$, $(n-1)^{-1}=1$ and $\langle (n-1)^{-1} \rangle = \langle1\rangle=\mathbb{Z}_n$, so $\{1,n-1\} \subseteq S$  } Therefore, $\deg_{\Gamma(\mathbb{Z}_n)}(x)=|S| \ge 2$.
	\end{proof} 

    { 
        We remark that both bounds in Corollary \ref{cor:degree-bounds} are sharp. When $n=3$, the inequality is attained at both ends. Moreover, if $n$ is prime, then $\Gamma(\mathbb{Z}_n)$ is a complete graph and hence every vertex has degree $n-1$, showing that the upper bound is always attained in this case.
    }

		\begin{thm}[\cite{tacbobo2023generator}]
			\label{thm:generator-graph-is-the-join-of-complete-graphs}
			Let $G$ be a nontrivial {cyclic} group of order $n$ and $S$ the set of all generators of $G$. If $\Gamma(G)$ is the generator graph of $G$, then $\Gamma(G) = K_{|S|} \vee\overline{ K_{n-|S|}}$.
		\end{thm}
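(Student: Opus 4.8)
The plan is to work directly from the definition of the generator graph by splitting $V(\Gamma(G)) = G$ into the set $S$ of generators of $G$ and its complement $T := G \setminus S$. Since $G$ is cyclic it has at least one generator, so $1 \le |S| \le n$ and $0 \le |T| = n - |S| \le n-1$; in particular the graphs $K_{|S|}$ and $\overline{K_{n-|S|}}$ appearing on the right-hand side are well defined, with the usual convention that $\overline{K_0}$ is the empty graph and that the join with it is the identity operation.

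With this partition fixed, I would check three adjacency statements, each an immediate consequence of the adjacency rule ``$xy$ is an edge of $\Gamma(G)$ if and only if $x$ or $y$ generates $G$''. First, if $x,y \in S$ are distinct, then $x$ generates $G$, so $xy \in E(\Gamma(G))$; hence the subgraph of $\Gamma(G)$ induced by $S$ is $K_{|S|}$. Second, if $x,y \in T$ are distinct, then neither $x$ nor $y$ generates $G$, so $xy \notin E(\Gamma(G))$; hence the subgraph induced by $T$ is $\overline{K_{n-|S|}}$. Third, if $x \in S$ and $y \in T$, then $x$ generates $G$, so $xy \in E(\Gamma(G))$; hence every vertex of $S$ is joined to every vertex of $T$.

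These three statements describe the edge set of $\Gamma(G)$ completely: on the vertex partition $V(\Gamma(G)) = S \cup T$, the edges are exactly those inside the clique on $S$, those inside the independent set on $T$ (of which there are none), and all $|S|\cdot|T|$ edges running between the two parts. This is precisely the definition of the join $K_{|S|} \vee \overline{K_{n-|S|}}$, and therefore $\Gamma(G) = K_{|S|} \vee \overline{K_{n-|S|}}$, as claimed. One could alternatively invoke Theorem \ref{thm:degree-of-a-vertex-in-gg} to recover the degrees, but that gives only the degree sequence, not the adjacency structure, so the direct case analysis above is the more economical route.

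I do not expect a genuine obstacle here; the argument is a short case check. The only subtlety worth flagging is the degenerate case $T = \emptyset$, which occurs exactly when every non-identity element of $G$ is a generator, i.e.\ when $n$ is prime (or $n \le 2$). In that case the asserted equality degenerates to $\Gamma(G) = K_n$, which is consistent with the first adjacency statement above and with the convention on joins just mentioned.
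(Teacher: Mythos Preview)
The paper does not actually prove this theorem: it is quoted from \cite{tacbobo2023generator} and simply stated here without argument, so there is no in-paper proof to compare against. Your direct case analysis on the partition $V(\Gamma(G)) = S \cup T$ is correct and is exactly the natural way to verify the claim from the definition of the generator graph.

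One small slip in your closing remark: the degenerate case $T = \emptyset$ never occurs for a nontrivial group, because the identity element $e$ generates only $\{e\}$ and hence always lies in $T$. What you describe as ``every non-identity element of $G$ is a generator'' gives $|T| = 1$, not $|T| = 0$; this is the case $n$ prime, where $\Gamma(G) = K_{n-1} \vee \overline{K_1} = K_n$. This does not affect the validity of your main argument, but you should drop or correct that paragraph.
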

		
		An example of the generator graph of a cyclic group is illustrated in Figure \ref{fig:An-example-of-the-generator-graph}. Note that for each vertex $u \in V(K_{|S|})$ and for each vertex $v \in V(\overline{K_{n-|S|}})$, we have $uv \in E(\Gamma(G))$. Thus, $\deg_{\Gamma(G)}(u) = n-1$ and $\deg_{\Gamma(G)} (v) = |S|$.
		
		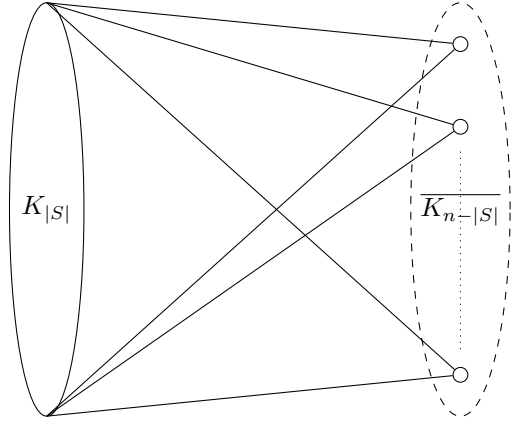
\begin{figure}[!h]
			\centering
			\begin{tikzpicture}[scale=1.1]
				\centering
				\tikzstyle{vertexX}=[circle,draw, fill=white!90, minimum size=7pt, 
				scale=0.8, inner sep=0.2pt]
				
				\node (v_1) at (0,0) [vertexX , label=above: ] {};
				\node (v_2) at (0,-1) [vertexX , label=left:] {};
				\node (v_k) at (0,-4) [vertexX , label=left: ] {};
				
				\node[fill=white] at (-5,-2) {$K_{|S|}$};
				\node[fill=white] at (0,-2) {$\overline{K_{n-|S|}}$};
				
				\draw (-5,-2) ellipse (0.45cm and 2.5cm);
				
				\draw[dashed] (0,-2) ellipse (0.6cm and 2.5cm);

				\draw (v_1) -- (-5, 0.5);
				\draw (v_1) -- (-5, -4.5);
				
				\draw (v_2) -- (-5, 0.5);
				\draw (v_2) -- (-5, -4.5);
				
				\draw (v_k) -- (-5, 0.5);
				\draw (v_k) -- (-5, -4.5);
				
				\draw [dotted] (0,-1.3) -- (0, -3.7);	
			\end{tikzpicture}
			\caption{An example of the generator graph.}
			\label{fig:An-example-of-the-generator-graph}
		\end{figure}

        We now define a special family of graphs which we call faithful graphs.

	\begin{defn}
		\label{defn:faithful-graph}
		Let $\Gamma=(V(\Gamma), E(\Gamma))$ be a graph and $e=xy \in E(\Gamma)$. If $N_\Gamma[x] \cup N_\Gamma[y] = V(\Gamma)$, then $e=xy$ is a faithful edge in $\Gamma$. If every edge of $\Gamma$ is a faithful edge, then $\Gamma$ is called a faithful graph. Let
		\begin{align*}
			\mathcal{F} = \{\Gamma \mid \Gamma \text{ is a faithful graph}\}.
		\end{align*}
	\end{defn}

        {It is clear from Definition \ref{defn:faithful-graph} that if $\Gamma \in \mathcal{F}$, then $\Gamma$ must be connected. We show, next, that every generator graph of a cyclic group is a member of the family $\mathcal{F}$. }

        \begin{thm}
		\label{thm:all-generator-graphs-are-faithful-graphs}
		Let $G$ be a nontrivial {cyclic} group. If $\Gamma(G)$ is the generator graph of $G$, then $\Gamma(G) \in \mathcal{F}$.
	\end{thm}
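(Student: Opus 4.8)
The plan is to verify the defining condition of a faithful graph (Definition \ref{defn:faithful-graph}) edge by edge, and the key observation is that every generator of $G$ is a \emph{universal} vertex of $\Gamma(G)$ — that is, it is adjacent to all other vertices. Once this is in hand, faithfulness of each edge is immediate, because any edge $e = xy$ of $\Gamma(G)$ must have at least one endpoint that generates $G$.

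Concretely, I would proceed as follows. First, let $e = xy$ be an arbitrary edge of $\Gamma(G)$. By the adjacency rule for the generator graph, at least one of $x, y$ generates $G$; without loss of generality assume $x$ is a generator of $G$. Second, I would show that $N_{\Gamma(G)}[x] = V(\Gamma(G))$: for any vertex $z \neq x$, the pair $\{x, z\}$ contains the generator $x$, so $xz \in E(\Gamma(G))$, whence $N_{\Gamma(G)}(x) = V(\Gamma(G)) \setminus \{x\}$ and $N_{\Gamma(G)}[x] = V(\Gamma(G))$. (When $G$ is finite of order $n$, this is just the statement $\deg_{\Gamma(G)}(x) = n-1$ from Theorem \ref{thm:degree-of-a-vertex-in-gg}; alternatively, it follows from Theorem \ref{thm:generator-graph-is-the-join-of-complete-graphs}, since $x$ lies in the $K_{|S|}$ part of the join $K_{|S|} \vee \overline{K_{n-|S|}}$ and every vertex of that part is joined to all of $\Gamma(G)$.) Third, since $N_{\Gamma(G)}[x] = V(\Gamma(G))$, we trivially get $N_{\Gamma(G)}[x] \cup N_{\Gamma(G)}[y] = V(\Gamma(G))$, so $e$ is a faithful edge. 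As $e$ was arbitrary, every edge of $\Gamma(G)$ is faithful, and therefore $\Gamma(G) \in \mathcal{F}$. I would also note at the outset that a nontrivial cyclic group has at least two elements and at least one generator, so the set $S$ is nonempty and $\Gamma(G)$ does have edges (and in any case the edgeless situation would satisfy the definition vacuously).

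There is essentially no serious obstacle here; the only point requiring a little care is the infinite cyclic case, where Theorem \ref{thm:degree-of-a-vertex-in-gg} is stated with the value $n-1$ and so does not literally apply. For that reason I would present the universality of a generator vertex directly from the adjacency rule, as in the second step above, rather than routing the argument through the degree formula, so that the proof is uniform for both finite and infinite nontrivial cyclic groups.
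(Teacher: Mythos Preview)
Your proposal is correct and follows essentially the same approach as the paper: pick an arbitrary edge $e=xy$, observe that at least one endpoint (say $x$) is a generator and hence a universal vertex, so $N_{\Gamma(G)}[x]=V(\Gamma(G))$ already forces $N_{\Gamma(G)}[x]\cup N_{\Gamma(G)}[y]=V(\Gamma(G))$. Your additional remark that universality should be argued directly from the adjacency rule (rather than via the degree formula $n-1$) so as to cover the infinite cyclic case uniformly is a nice refinement over the paper's version.
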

	
	\begin{proof}
		Let $G$ be a nontrivial {cyclic} group and $\Gamma(G)$ the generator graph of $G$. Let $e = xy \in E(\Gamma(G))$. We show that $e$ is a faithful edge in $\Gamma(G)$. Since $\Gamma(G)$ is the generator graph, at least one of the endpoints $x$ and $y$ of $e$ has degree $n-1$ in $\Gamma(G)$. Without any loss of generality, let $\deg_{\Gamma(G)}x =n-1$. This implies that $N_{\Gamma(G)}[x]=V(\Gamma(G))$. Since $\deg_{\Gamma(G)}y \le n-1$, we have $N_{\Gamma(G)}[y]\subseteq N_{\Gamma(G)}[x]=V(\Gamma(G))$. Thus,
		$N_{\Gamma(G)}[x] \cup N_{\Gamma(G)}[y] =V(\Gamma(G))$, and hence $e$ is a {faithful} edge in $\Gamma(G)$. Thus, $\Gamma(G) \in \mathcal{F}$.
	\end{proof}
	
	Since we have now shown that every generator graph of a cyclic group belongs to $\mathcal{F}$, we will, going forward, prove the properties of $\mathcal{F}$, so that the properties of the generator graph of a cyclic group will follow immediately.
	
	\begin{thm}
		\label{thm:G-in-F-implies-diam-G-is-2}
		If $\Gamma\in \mathcal{F}$, then  $\diam(\Gamma)\le 2$.
	\end{thm}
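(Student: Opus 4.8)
The plan is to show that any two distinct non-adjacent vertices of a faithful graph are connected by a path of length $2$; combined with connectedness (which follows from the definition of $\mathcal{F}$, as noted in the excerpt), this gives $\diam(\Gamma)\le 2$. So let $\Gamma\in\mathcal{F}$ and let $u,v\in V(\Gamma)$ with $u\neq v$ and $uv\notin E(\Gamma)$; it suffices to produce a common neighbour of $u$ and $v$.

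The key step is to exploit faithfulness of some edge incident with $u$. Since $\Gamma\in\mathcal{F}$ is connected and has at least two vertices, $u$ has a neighbour: pick any $w\in N_\Gamma(u)$ (here I would note that $\Gamma$ has no isolated vertices, which is immediate from connectedness once $|V(\Gamma)|\ge 2$; the degenerate case $|V(\Gamma)|=1$ has no pair of distinct vertices and is vacuous). The edge $e=uw$ is faithful, so $N_\Gamma[u]\cup N_\Gamma[w]=V(\Gamma)$. In particular $v\in N_\Gamma[u]\cup N_\Gamma[w]$. Since $v\neq u$ and $v\notin N_\Gamma(u)$ (as $uv\notin E(\Gamma)$), we have $v\notin N_\Gamma[u]$, and therefore $v\in N_\Gamma[w]$. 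Again $v\neq w$ is forced (otherwise $v=w\in N_\Gamma(u)$, contradicting $uv\notin E(\Gamma)$), so in fact $v\in N_\Gamma(w)$, i.e. $vw\in E(\Gamma)$. Thus $w$ is adjacent to both $u$ and $v$, giving $d_\Gamma(u,v)=2$.

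Finally I would assemble these observations: every pair of distinct vertices is either adjacent (distance $1$) or has a common neighbour (distance $2$) by the argument above, hence $d_\Gamma(u,v)\le 2$ for all $u,v\in V(\Gamma)$, and so $\diam(\Gamma)\le 2$. I do not anticipate any real obstacle here; the only point requiring a moment's care is handling the trivial/degenerate cases (a single vertex, or a vertex with no neighbours), and both are disposed of by the remark in the excerpt that any $\Gamma\in\mathcal{F}$ is connected. In the intended application, Theorem \ref{thm:all-generator-graphs-are-faithful-graphs} then yields at once that the generator graph of any nontrivial cyclic group has diameter at most $2$.
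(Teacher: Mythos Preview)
Your proof is correct and a bit cleaner than the paper's. The paper argues as follows: fix an arbitrary edge $uv\in E(\Gamma)$ and two arbitrary vertices $u^*,v^*$; faithfulness of $uv$ forces $u^*,v^*\in N_\Gamma[u]\cup N_\Gamma[v]$, and a short case analysis (both in $N_\Gamma[u]$, both in $N_\Gamma[v]$, or one in each) is carried out, the last case being settled by contradiction via faithfulness of a second edge $uu^*$. Your choice to take the edge incident with $u$ from the outset collapses this case analysis into a single step and yields a common neighbour directly. Both arguments rest on the same observation---that a faithful edge already covers the whole vertex set---and both tacitly use the paper's remark after Definition~\ref{defn:faithful-graph} that members of $\mathcal{F}$ are connected (equivalently, have at least one edge when $|V(\Gamma)|\ge 2$); you make this reliance explicit, which is a small expository plus.
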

	
	\begin{proof}
		Let $\Gamma\in \mathcal{F}$ and $e=uv\in E(\Gamma)$. Since $\Gamma\in \mathcal{F}$, $N_\Gamma[u]\cup N_\Gamma[v]=V(\Gamma)$. Now let $u^*, v^*\in V(\Gamma)$. Since $e=uv$ is a faithful edge in $\Gamma$, we have $u^*, v^* \in N_\Gamma[u]\cup N_\Gamma[v]$. If $u^*, v^* \in N_\Gamma[u]$, then $d_\Gamma(u^*, v^*)\le 2$, implying that $\diam(\Gamma)\le 2$. Similarly, if $u^*, v^* \in N_\Gamma[v]$, then $d_\Gamma(u^*, v^*)\le 2$, implying that $\diam(\Gamma)\le 2$. Hence, we may assume that $u^* \in N_\Gamma[u]$ and $v^* \in N_\Gamma[v]$. If $u^*v^* \in E(\Gamma)$, then $d_\Gamma(u^*, v^*)=1\le 2$, so the conclusion $\diam(\Gamma)\le 2$ holds. Hence, we may assume that $u^*v^*\notin E(\Gamma)$. Consider the edge $uu^* \in E(\Gamma)$. Note that the vertex $v^*\in N_\Gamma[v]$ is adjacent to neither $u$ nor $u^*$; that is, $v^* \notin N_\Gamma[u]$ and $v^* \notin N_\Gamma[u^*]$. Thus, $v^* \notin N_\Gamma[u]\cup N_\Gamma[u^*]$, implying that $ N_\Gamma[u]\cup N_\Gamma[u^*] \ne V(\Gamma)$. This, however, contradicts our assumption that $\Gamma \in \mathcal{F}$.
	\end{proof}
	
	We remark that the converse of Theorem \ref{thm:G-in-F-implies-diam-G-is-2} is not true in general. As a counterexample, consider the cycle $C_5$ as shown in Figure \ref{fig:The-cycle-C5}. Note that 
	\begin{align*}
		\diam(C_5) = \left \lfloor \frac{5}{2} \right \rfloor = 2.
	\end{align*}
	However, $N_{C_5}[u] \cup N_{C_5}[v] = V(C_5)\setminus \{x\} \ne V(C_5)$, so  $uv \in E(C_5)$ is not a faithful edge. Thus, $C_5 \notin \mathcal{F}$. In fact, none of the edges of $C_5$ is a faithful edge.
	
	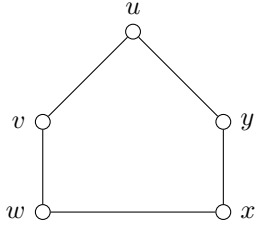
\begin{figure}[!h]
		\centering
		
		\begin{tikzpicture}[scale=1.2]
			\centering
			\tikzstyle{vertexX}=[circle,draw, fill=white!90, minimum size=7pt, 
			scale=0.8, inner sep=0.2pt]
			
			\node (u) at (0,0) [vertexX , label=above:$u$ ] {};
			\node (v) at (-1,-1) [vertexX , label=left:$v$] {};
			\node (w) at (-1,-2) [vertexX , label=left: $w$] {};
			\node (x) at (1,-2) [vertexX , label=right:$x$] {};
			\node (y) at (1,-1) [vertexX , label=right:$y$] {};
			
			
			\draw (u) -- (v) -- (w) -- (x) -- (y) -- (u) -- cycle;
			
		\end{tikzpicture}
		\caption{The cycle $C_5$.}
		\label{fig:The-cycle-C5}
	\end{figure}

        As a consequence of Theorem \ref{thm:all-generator-graphs-are-faithful-graphs} and  Theorem \ref{thm:G-in-F-implies-diam-G-is-2}, we obtain the following corollary.
	\begin{cor}
		\label{cor:diameter-of-the-generator-graph}
		If $\Gamma(G)$ is the generator graph of a finite cyclic group $G$, then $\diam(\Gamma(G)) \le 2$.
	\end{cor}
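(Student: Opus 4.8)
The plan is to obtain the corollary by simply composing the two results immediately preceding it. First I would observe that, since $G$ is a nontrivial cyclic group, Theorem \ref{thm:all-generator-graphs-are-faithful-graphs} applies and tells us that the generator graph $\Gamma(G)$ lies in the family $\mathcal{F}$ of faithful graphs. Then I would apply Theorem \ref{thm:G-in-F-implies-diam-G-is-2} with $\Gamma = \Gamma(G)$: since $\Gamma(G) \in \mathcal{F}$, that theorem yields $\diam(\Gamma(G)) \le 2$, which is exactly the claim. No further argument is needed.

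The only thing worth a line of commentary is the mild discrepancy in hypotheses: the corollary is phrased for an arbitrary finite cyclic group $G$, whereas Theorem \ref{thm:all-generator-graphs-are-faithful-graphs} is stated for nontrivial cyclic groups. This is harmless. If $G$ is the trivial group, then $\Gamma(G)$ consists of a single vertex and no edges, so $\diam(\Gamma(G)) = 0 \le 2$ holds trivially; otherwise $G$ is nontrivial and the chain above applies. Moreover, neither Theorem \ref{thm:all-generator-graphs-are-faithful-graphs} nor Theorem \ref{thm:G-in-F-implies-diam-G-is-2} uses finiteness, so the hypothesis that $G$ be finite is not actually required here, and one could even state the conclusion $\diam(\Gamma(G)) \le 2$ for every nontrivial cyclic group.

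I do not anticipate any genuine obstacle: each step is a verbatim invocation of an already-proved statement, and the corollary is in effect just the composition ``generator graph $\Rightarrow$ faithful graph $\Rightarrow$ diameter at most $2$.'' The one (minor) point of hygiene is to make explicit that the graph $\Gamma$ in Theorem \ref{thm:G-in-F-implies-diam-G-is-2} may be taken to be the $\Gamma(G)$ supplied by Theorem \ref{thm:all-generator-graphs-are-faithful-graphs}, so that the reader sees the two cited theorems genuinely chain together.
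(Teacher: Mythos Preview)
Your proposal is correct and matches the paper's approach exactly: the paper presents this corollary simply as a consequence of Theorem~\ref{thm:all-generator-graphs-are-faithful-graphs} and Theorem~\ref{thm:G-in-F-implies-diam-G-is-2}, with no additional argument. Your extra remarks on the trivial-group edge case and the unnecessary finiteness hypothesis are valid observations that go slightly beyond what the paper records.
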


  \begin{thm}[\cite{tacbobo2023generator}]
		\label{thm:group-of-prime-order-in-gg}
		Let $G$ be a nontrivial group of order $n$. The generator graph $\Gamma(G)$ of $G$ is complete if and only if $n$ is prime.
	   \end{thm}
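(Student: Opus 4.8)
My plan is to reformulate completeness of $\Gamma(G)$ as a purely group-theoretic condition and then simply read off when it holds. By Theorem~\ref{thm:degree-of-a-vertex-in-gg} (equivalently, by the join decomposition in Theorem~\ref{thm:generator-graph-is-the-join-of-complete-graphs}), if $S$ denotes the set of generators of $G$ then every vertex in $S$ has degree $n-1$ while every vertex outside $S$ has degree $|S|$; since the identity $e$ of the nontrivial group $G$ is never a generator, $|S| \le n-1$. Hence $\Gamma(G) = K_n$ if and only if $|S| = n-1$, i.e.\ if and only if \emph{every} non-identity element of $G$ generates $G$. So the theorem reduces to the claim that a finite group of order $n \ge 2$ has the property that each of its non-identity elements is a generator precisely when $n$ is prime.

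For the ``if'' direction: if $n$ is prime, then by Lagrange's theorem the order of any element of $G$ divides $n$ and is therefore $1$ or $n$; only $e$ has order $1$, so every other element has order $n$ and generates $G$. (Equivalently, $S = V(\Gamma(G)) \setminus \{e\}$, so Theorem~\ref{thm:generator-graph-is-the-join-of-complete-graphs} gives $\Gamma(G) = K_{n-1} \vee \overline{K_1} = K_n$.) For the ``only if'' direction, suppose $\Gamma(G)$ is complete, so that every non-identity element of $G$ is a generator. Then each such element generates $G$, so $G$ is cyclic and $G \cong \mathbb{Z}_n$; but in $\mathbb{Z}_n$ an element $k$ generates if and only if $\gcd(k,n) = 1$, so if $n$ had a divisor $d$ with $1 < d < n$, the element $d$ would be a non-identity non-generator, contradicting completeness. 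Hence $n$ has no such divisor, i.e.\ $n$ is prime.

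This argument is elementary; the only point that calls for a little care is observing at the outset that the completeness hypothesis already forces $G$ to be cyclic (so that $|S| = \varphi(n) > 0$, rather than $S = \varnothing$, which would occur for a non-cyclic $G$), and then exhibiting, in the converse, a non-identity non-generator whenever $n$ is composite — this is exactly where compositeness is used. I do not anticipate any genuine obstacle beyond this bookkeeping; an alternative, slightly more group-theoretic phrasing of the last step invokes the fact that a cyclic group of composite order $n$ has a proper nontrivial subgroup (one of order $d$ for each divisor $d \mid n$), every non-identity element of which fails to generate $G$.
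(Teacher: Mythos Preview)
Your argument is correct. The reduction via Theorem~\ref{thm:degree-of-a-vertex-in-gg} to the condition $|S| = n-1$ is valid (note that Theorem~\ref{thm:degree-of-a-vertex-in-gg} is stated for arbitrary nontrivial groups, which is what you need here; your parenthetical appeal to Theorem~\ref{thm:generator-graph-is-the-join-of-complete-graphs} is, strictly speaking, only available once $G$ is known to be cyclic, but you do not actually rely on it). The two directions are then handled cleanly: Lagrange for primality implying completeness, and for the converse you correctly observe first that completeness forces $S \ne \varnothing$ and hence $G$ cyclic, after which any proper divisor $d$ of $n$ furnishes a non-identity non-generator.

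As for comparison with the paper: there is nothing to compare. Theorem~\ref{thm:group-of-prime-order-in-gg} is quoted from \cite{tacbobo2023generator} and is not proved in the present paper; it is used only as an input to Corollaries~\ref{cor:the-diameter-of-the-generator-graph-exact}, \ref{cor:the-wiener-index-of-Gamma-G-of-prime-order}, and \ref{cor:the-metric-dimension-of-the-generator-graph-in-terms-of n}. Your self-contained proof is therefore a genuine addition rather than a reproduction of anything in the text.
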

       
    {

    The exact value for the diameter of the generator graph of a nontrivial finite cyclic group is presented in the next result as a consequence of  Corollary \ref{cor:diameter-of-the-generator-graph} and Theorem \ref{thm:group-of-prime-order-in-gg}.
    
       \begin{cor}
		\label{cor:the-diameter-of-the-generator-graph-exact}
		Let $G$ be a nontrivial cyclic group of order $n$, and $\Gamma(G)$  the generator graph of $G$. Then the diameter of $\Gamma(G)$ is given by
			\begin{align*}
				\diam(\Gamma(G)) = \begin{cases}
					1 &\text{ if } n \text{ is prime},\\
					2 & \text { otherwise}.
				\end{cases}
			\end{align*}
	   \end{cor}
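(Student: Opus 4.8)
The plan is to combine the two results that the corollary explicitly cites, so the proof is essentially a case split on whether $n$ is prime. First I would invoke Corollary~\ref{cor:diameter-of-the-generator-graph} to record that $\diam(\Gamma(G)) \le 2$ always holds for a finite cyclic group $G$; this immediately rules out any value larger than $2$. It remains to pin down whether the diameter is $1$ or $2$ in each case, and by definition $\diam(\Gamma(G)) = 1$ precisely when $\Gamma(G)$ is complete (assuming $|V(\Gamma(G))| = n \ge 2$, so the graph has at least one edge and is not a single vertex).

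Next I would handle the case $n$ prime. By Theorem~\ref{thm:group-of-prime-order-in-gg}, $\Gamma(G)$ is complete if and only if $n$ is prime, so when $n$ is prime every pair of distinct vertices is adjacent and $\diam(\Gamma(G)) = 1$. For the complementary case, when $n$ is not prime, the same Theorem~\ref{thm:group-of-prime-order-in-gg} tells us $\Gamma(G)$ is \emph{not} complete, so there exist two distinct vertices $x, y$ with $xy \notin E(\Gamma(G))$, whence $d_{\Gamma(G)}(x,y) \ge 2$; combined with the upper bound $\diam(\Gamma(G)) \le 2$ from Corollary~\ref{cor:diameter-of-the-generator-graph}, this forces $\diam(\Gamma(G)) = 2$. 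Putting the two cases together yields exactly the piecewise formula in the statement.

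One small point worth addressing explicitly is the degenerate case $n = 2$: here $\mathbb{Z}_2 \cong \langle g\rangle$ has $g$ as its unique nontrivial element and a generator, so $\Gamma(G) = K_2$, which has diameter $1$, consistent with $2$ being prime. So no separate treatment is needed beyond noting that $n \ge 2$ guarantees $\Gamma(G)$ is nonempty and connected (the latter also following from Theorem~\ref{thm:all-generator-graphs-are-faithful-graphs} together with the remark that members of $\mathcal{F}$ are connected). There is no real obstacle in this proof — it is a direct corollary; the only thing to be careful about is correctly reading off that "diameter $1$" is synonymous with "complete graph" for a graph on at least two vertices, so that Theorem~\ref{thm:group-of-prime-order-in-gg} can be applied in both directions of the equivalence.
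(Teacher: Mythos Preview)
Your proof is correct and follows exactly the route indicated by the paper, which does not supply a separate proof but simply states that the result is a consequence of Corollary~\ref{cor:diameter-of-the-generator-graph} and Theorem~\ref{thm:group-of-prime-order-in-gg}. Your case split on whether $n$ is prime, together with the observation that $\diam(\Gamma(G))=1$ iff $\Gamma(G)$ is complete, is precisely what is needed.
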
 
    }
	
	\begin{thm}
		\label{thm:the-max-degree-of-a-vertex-of-Gamma-in-F-is-atleast-n-over-2}
		If $\Gamma \in \mathcal{F}$, then $\Delta \ge \displaystyle \frac{|V(\Gamma)|}{2}$.
	\end{thm}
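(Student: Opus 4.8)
The plan is to exploit, for a single faithful edge, the tension between \emph{coverage} and \emph{overlap} of the two closed neighbourhoods. Write $n = |V(\Gamma)|$. If $uv$ is a faithful edge, then $N_\Gamma[u] \cup N_\Gamma[v]$ equals all of $V(\Gamma)$ and hence has size $n$; on the other hand, $N_\Gamma[u]$ and $N_\Gamma[v]$ share at least the two vertices $u$ and $v$. Feeding both facts into inclusion--exclusion will force $\deg_\Gamma(u) + \deg_\Gamma(v)$ to be at least $n$, and therefore one of the two degrees to be at least $n/2$.

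In detail, I would first fix an arbitrary edge $e = uv \in E(\Gamma)$; such an edge exists because $\Gamma \in \mathcal{F}$ is connected and (in the setting of interest, e.g.\ generator graphs of nontrivial groups) has at least two vertices. Since $e$ is faithful, $N_\Gamma[u] \cup N_\Gamma[v] = V(\Gamma)$, so by inclusion--exclusion
\[ n = |N_\Gamma[u] \cup N_\Gamma[v]| = |N_\Gamma[u]| + |N_\Gamma[v]| - |N_\Gamma[u] \cap N_\Gamma[v]|. \]
The key observation is that $\{u,v\} \subseteq N_\Gamma[u] \cap N_\Gamma[v]$: indeed $u \in N_\Gamma[u]$ by definition of the closed neighbourhood and $u \in N_\Gamma[v]$ because $uv \in E(\Gamma)$, and symmetrically $v$ lies in both. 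Hence $|N_\Gamma[u] \cap N_\Gamma[v]| \ge 2$, which gives $|N_\Gamma[u]| + |N_\Gamma[v]| \ge n + 2$.

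Finally, using $|N_\Gamma[w]| = \deg_\Gamma(w) + 1$ for every vertex $w$, the last inequality rearranges to $\deg_\Gamma(u) + \deg_\Gamma(v) \ge n$, so $\max\{\deg_\Gamma(u), \deg_\Gamma(v)\} \ge n/2$, and therefore $\Delta(\Gamma) \ge n/2 = |V(\Gamma)|/2$, as claimed. I do not expect a genuine obstacle in this argument; the only point requiring a little care is ensuring that $\Gamma$ actually has an edge, which is guaranteed by connectivity once $|V(\Gamma)| \ge 2$ (and in particular for the generator graph of any nontrivial group, where some generator is adjacent to every vertex).
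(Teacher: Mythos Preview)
Your argument is correct and, in fact, cleaner than the paper's. The paper argues by contradiction: it assumes $\Delta(\Gamma) \le \frac{|V(\Gamma)|}{2}-1$, picks a vertex $v$ of maximum degree, shows that $v$ has at least $\frac{|V(\Gamma)|}{2}$ non-neighbours, and then observes that any neighbour $u$ of $v$ has too few remaining edges to reach all of those non-neighbours, so the edge $uv$ fails to be faithful. Your route is direct: from a single faithful edge $uv$ you apply inclusion--exclusion to the closed neighbourhoods, use the overlap $\{u,v\} \subseteq N_\Gamma[u]\cap N_\Gamma[v]$, and read off $\deg_\Gamma(u)+\deg_\Gamma(v)\ge n$. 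Both proofs need $\Gamma$ to have at least one edge (a point you flag explicitly and the paper handles tacitly via its standing connectedness remark); beyond that, your inclusion--exclusion computation is shorter, avoids the contradiction scaffolding, and as a bonus shows that \emph{every} faithful edge already has an endpoint of degree at least $n/2$, not merely some edge incident with a maximum-degree vertex.
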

	
	\begin{proof}
		Suppose, to the contrary, that $\Gamma \in \mathcal{F}$ and $\Delta(\Gamma) \le \displaystyle \frac{|V(\Gamma)|}{2} -1$. Let $v\in V(\Gamma)$ such that $\deg_\Gamma v = \Delta(\Gamma)$. Observe that $|N_\Gamma(v)| + |\overline{N_\Gamma(v)}| +1 = |V(\Gamma)|$. Thus,
		\begin{align*}
			|\overline{N_\Gamma(v)}| &= |V(\Gamma)| - |N_\Gamma(v)| -1\\
			&\ge |V(\Gamma)| - \left( \frac{|V(\Gamma)|}{2}  -1 \right) -1\\
			&=\frac{|V(\Gamma)|}{2}.
		\end{align*}
		
		Let $e=uv \in E(\Gamma)$ for some $u \in N_\Gamma (v)$. Since $\Delta(\Gamma) \le \displaystyle \frac{|V(\Gamma)|}{2} -1$, every vertex $u \in N_\Gamma (v)$ is adjacent to at most $\displaystyle \frac{|V(\Gamma)|}{2} -2$ vertices of $\overline{N_\Gamma (v)}$. Thus, since $|\overline{N_\Gamma (v)}| \ge \displaystyle \frac{|V(\Gamma)|}{2}$, there exists a vertex $u^* \in \overline{N_\Gamma (v)}$ such that $uu^* \notin E(\Gamma)$. Since $vu^* \notin E(\Gamma)$, we have $u^* \notin N_\Gamma [u] \cup N_\Gamma [v]$, and so $N_\Gamma [u] \cup N_\Gamma [v] \ne V(\Gamma)$. Thus, $e=uv \in E(\Gamma)$ is not a faithful edge, contradicting our supposition that $\Gamma \in \mathcal{F}$.
	\end{proof}

	The result of Theorem \ref{thm:the-max-degree-of-a-vertex-of-Gamma-in-F-is-atleast-n-over-2} is a necessary condition for a graph $\Gamma$ to be an element of $\mathcal{F}$ but it is not sufficient. To illustrate this, consider the graph $\Gamma$ shown in Figure \ref{fig:a-non-faithful-graph}. Note that $\Delta(\Gamma) = 3 \ge 2 = \displaystyle \frac{|V(\Gamma)|}{2}$. However, the edge $e=uv \in E(\Gamma)$ is not faithful since $N_\Gamma[u] \cup N_\Gamma[v] = V(\Gamma)\setminus \{w\} \ne V(G)$. Thus, $\Gamma \notin \mathcal{F}$.
	
	\begin{figure}[!h]
		\centering
		
		\begin{tikzpicture}[scale=1.2]
			\centering
			\tikzstyle{vertexX}=[circle,draw, fill=white!90, minimum size=7pt, 
			scale=0.8, inner sep=0.2pt]
			
			\node (u) at (0,0) [vertexX , label=right:$u$ ] {};
			\node (v) at (-2,0) [vertexX , label=left:$v$] {};
			\node (w) at (-2,-2) [vertexX , label=left: $w$] {};
			\node (x) at (0,-2) [vertexX , label=right:$x$] {};
			
			
			\draw (u) -- (v) --  (x) -- (u) -- cycle;
			\draw (w) -- (x);
			
		\end{tikzpicture}
		\caption{A graph $\Gamma$.}
		\label{fig:a-non-faithful-graph}
	\end{figure}
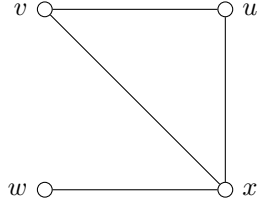

        As a consequence of Theorem \ref{thm:all-generator-graphs-are-faithful-graphs} and Theorem \ref{thm:the-max-degree-of-a-vertex-of-Gamma-in-F-is-atleast-n-over-2}, we obtain the following corollary.
	\begin{cor}
		\label{cor:minimum-maximum-degree-of-th-generator-graph}
		Let $G$ be a nontrivial {cyclic} group of order $n$. If $\Gamma(G)$ is the generator graph of $G$, then $$\Delta(\Gamma(G)) \ge \displaystyle \frac{|V(\Gamma(G))|}{2} = \frac{n}{2}.$$
	\end{cor}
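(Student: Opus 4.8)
The plan is to derive this immediately from the two results we are told to assume. By Theorem \ref{thm:all-generator-graphs-are-faithful-graphs}, the generator graph $\Gamma(G)$ of a nontrivial cyclic group $G$ belongs to the family $\mathcal{F}$ of faithful graphs. By Theorem \ref{thm:the-max-degree-of-a-vertex-of-Gamma-in-F-is-atleast-n-over-2}, any graph in $\mathcal{F}$ satisfies $\Delta \ge |V(\Gamma)|/2$. So the first step is simply to invoke Theorem \ref{thm:all-generator-graphs-are-faithful-graphs} to place $\Gamma(G)$ in $\mathcal{F}$, and the second step is to apply Theorem \ref{thm:the-max-degree-of-a-vertex-of-Gamma-in-F-is-atleast-n-over-2} to $\Gamma(G)$ to obtain $\Delta(\Gamma(G)) \ge |V(\Gamma(G))|/2$.

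The only remaining ingredient is the identification $|V(\Gamma(G))| = n$. This is immediate from the definition of the generator graph: the vertex set of $\Gamma(G)$ is exactly the underlying set of $G$, which has $n$ elements since $G$ has order $n$. Substituting this into the inequality yields $\Delta(\Gamma(G)) \ge n/2$, which is the claimed bound.

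I do not anticipate any real obstacle here; the corollary is a direct specialization of the general statement about $\mathcal{F}$ to the particular case of generator graphs. If one wanted a self-contained alternative (not relying on the faithful-graph machinery), one could instead argue directly from Theorem \ref{thm:degree-of-a-vertex-in-gg}: any generator $x \in S$ is adjacent to all other $n-1$ vertices, so taking any such $x$ (note $S \ne \emptyset$ since a cyclic group always has at least one generator) gives $\Delta(\Gamma(G)) \ge n - 1 \ge n/2$ whenever $n \ge 2$. But since the excerpt explicitly frames this as a consequence of the two cited theorems, the two-line proof via $\mathcal{F}$ is the intended one, and that is the route I would write up.
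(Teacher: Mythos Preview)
Your proposal is correct and matches the paper's approach exactly: the paper states this corollary without a written proof, presenting it purely as an immediate consequence of Theorem~\ref{thm:all-generator-graphs-are-faithful-graphs} and Theorem~\ref{thm:the-max-degree-of-a-vertex-of-Gamma-in-F-is-atleast-n-over-2}. Your additional remark about the direct argument via Theorem~\ref{thm:degree-of-a-vertex-in-gg} is also sound and in fact anticipates the paper's own observation immediately following the corollary that the bound is weak since $\Delta(\Gamma(G)) = n-1 > n/2$.
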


        We remark that the result of Corollary \ref{cor:minimum-maximum-degree-of-th-generator-graph} is weak because we already know that if $\Gamma(G)$ is the generator graph of a nontrivial cyclic group $G$ of order $n$, then $$\Delta(\Gamma(G)) = n-1 > \frac{n}{2}.$$

		\section{Some Topological Indices of the Generator Graph}
        \label{Sec:some-topological-indices-of-the-generator-graph}

		In this section, we study some of the topological indices of the generator graph of a nontrivial cyclic group. In particular, we present explicit formulas for the Wiener index, the Gutman index, the harmonic index, the Randić index, and the Sombor index of the generator graph $\Gamma(G)$ of a cyclic group $G$ of order $n\ge 2$, whose set of generators is denoted by $S$. Each of these explicit formulas is presented in terms of the group's order $n$ and the number of the group's generators $|S|$. { Moreover, for each topological index considered, we also provide the explicit formula for the special case where $n$ is prime. }
		
		\subsection{The Wiener Index of the Generator Graph} 
		
		In this subsection, we study the Wiener index of the generator graph of a nontrivial cyclic group. The study of the Wiener index goes back to 1947. This index was first introduced by a chemist, Herold Wiener, hence it is called the Wiener index. In \cite{wiener1947structural}, Wiener  showed that there is a correlation between the Wiener index and boiling points of saturated hydrocarbons. Mathematicians now study this index beyond the class of chemical graphs.
		
		Let $\Gamma$ be a connected graph. The \emph{Wiener index} of $\Gamma$, denoted $W(\Gamma)$, is sum of the distances between all unordered pairs of vertices of $\Gamma$, i.e.,
		\begin{align*}
			W(\Gamma) = \displaystyle \sum\limits_{ \{u,v\} \subseteq V(\Gamma) } d_\Gamma (u,v).
		\end{align*}

	Using Theorem \ref{thm:generator-graph-is-the-join-of-complete-graphs}, we present the explicit formula for the Wiener index of the generator graph $\Gamma(G)$ of any cyclic group $G$ with {$n\ge 2$} elements and $|S| \ge 2$ generators.
	
	\begin{thm}
		\label{thm:the-wiener-index-of-Gamma-G}
		Let $G$ be a nontrivial cyclic group of order $n$, and $S$ the set of all generators of $G$. If $\Gamma(G)$ is the generator graph of $G$, then the Wiener index of $\Gamma(G)$ is given by
		\begin{align*}
			W(\Gamma(G)) = \frac{1}{2}|S|^2 - \frac{1}{2}(2n-1)|S| + n^2-n.
		\end{align*}
	\end{thm}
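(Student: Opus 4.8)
The plan is to read off the answer from the explicit structural description $\Gamma(G) = K_{|S|} \vee \overline{K_{n-|S|}}$ provided by Theorem \ref{thm:generator-graph-is-the-join-of-complete-graphs}, together with the bound $\diam(\Gamma(G)) \le 2$ from Corollary \ref{cor:diameter-of-the-generator-graph}. Since every pair of distinct vertices is then at distance $1$ or $2$, the Wiener sum is just $\binom{n}{2}$ plus the number of pairs at distance exactly $2$. To organise the count, I would first partition the $\binom{n}{2}$ unordered pairs $\{u,v\}\subseteq V(\Gamma(G))$ according to how many of $u,v$ lie in the generator set $S$: (i) both in $S$; (ii) exactly one in $S$; (iii) neither in $S$.

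Next I would determine the distance within each class from the join structure. Every vertex of the $K_{|S|}$ part is adjacent to all other vertices of $\Gamma(G)$ (equivalently, has degree $n-1$ by Theorem \ref{thm:degree-of-a-vertex-in-gg}), so all pairs of types (i) and (ii) are at distance $1$. For type (iii), two non-generators are non-adjacent, because the join adds no edges inside $V(\overline{K_{n-|S|}})$; but each of them is adjacent to any fixed generator, so they are at distance exactly $2$ (this class being empty, hence contributing nothing, when $n-|S|<2$, and $|S|\ge 1$ since $G$ is a nontrivial cyclic group). Counting the classes gives $\binom{|S|}{2}$ pairs of type (i), $|S|(n-|S|)$ pairs of type (ii), and $\binom{n-|S|}{2}$ pairs of type (iii), so that
\[
W(\Gamma(G)) = \binom{|S|}{2} + |S|(n-|S|) + 2\binom{n-|S|}{2}.
\]

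The remaining step is the routine algebraic simplification of this expression: writing $s=|S|$, expanding $\tfrac{s(s-1)}{2} + s(n-s) + (n-s)(n-s-1)$ and collecting terms yields $\tfrac12 s^2 - \tfrac12(2n-1)s + n^2 - n$, which is the claimed formula. I do not anticipate a real obstacle here; the only points meriting a line of care are the justification that type-(iii) pairs are at distance \emph{exactly} $2$ rather than merely at most $2$ (immediate from the existence of a common neighbour together with their non-adjacency), and checking that the final formula stays correct in the degenerate case $n=2$, where $|S|=1$ and the type-(iii) class is vacuous.
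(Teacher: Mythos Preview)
Your proposal is correct and follows essentially the same approach as the paper: both use the join structure $\Gamma(G)=K_{|S|}\vee\overline{K_{n-|S|}}$ and the diameter bound to split the $\binom{n}{2}$ pairs into those at distance $1$ (your types (i) and (ii)) and those at distance $2$ (your type (iii)), arriving at the identical expression $\binom{|S|}{2}+|S|(n-|S|)+2\binom{n-|S|}{2}$ before simplifying. Your extra remarks on why type-(iii) pairs have distance exactly $2$ and on the degenerate case $n=2$ are welcome clarifications, but the argument is the same.
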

	
	\begin{proof}
		Let $\Gamma(G)$ be the generator graph of a nontrivial cyclic group of order $n$ whose set of generators is denoted by $S$. For an integer $i\ge 1$, let
		\begin{align*}
			D_i(u,v) = \{ \{u,v\} \subseteq V(\Gamma(G)) \mid d_{\Gamma(G)}(u,v) = i\}.
		\end{align*}
		By Corollary \ref{cor:diameter-of-the-generator-graph}, $diam(\Gamma(G))\le 2$, so { $\displaystyle \bigcup\limits_{u,v \in V(\Gamma(G)) } \{u,v\} = V(\Gamma(G))$ for all $ \{u,v\} \in \displaystyle \bigcup\limits_{i=1}^2 D_i(u,v)$. } 
        By Theorem \ref{thm:generator-graph-is-the-join-of-complete-graphs}, $\Gamma(G) = K_{|S|} \vee\overline{ K_{n-|S|}}$. Thus, 
		\begin{align*}
			|D_1(u,v)| = \binom{|S|}{2} + (n-|S|)|S|
		\end{align*}
		and 
		\begin{align*}
			|D_2(u,v)| = \binom{n-|S|}{2}.
		\end{align*}
		The Wiener index of $\Gamma(G)$ is
		\begin{align*}
			W(\Gamma(G)) &={ \displaystyle \sum\limits_{ \{u,v\} \subseteq V(\Gamma(G)) } d_{\Gamma(G)} (u,v)}\\
			&={ \displaystyle \sum\limits_{ \{u,v\} \in D_1(u,v) } d_{\Gamma(G)} (u,v) + \displaystyle \sum\limits_{ \{u,v\} \in D_2(u,v) } d_{\Gamma(G)} (u,v)  } \\
			&= 1\cdot|D_1(u,v)| + 	2\cdot|D_2(u,v)|\\
			&=\left[\binom{|S|}{2} + (n-|S|)|S|\right] + 2\cdot\left[\binom{n-|S|}{2}\right] \\
			&=\frac{|S|(|S|-1)}{2} + (n-|S|)|S| + 2\cdot\left[\frac{(n-|S|)(n-|S|-1)}{2}\right]\\
			&= \frac{1}{2}|S|^2 - \frac{1}{2}|S| + (n-|S|)(n-1)\\
			&=\frac{1}{2}|S|^2 - \frac{1}{2}|S| + n^2 - n -n|S| + |S|\\
			&=\frac{1}{2}|S|^2 -\frac{1}{2}(2n-1)|S| + n^2-n,
		\end{align*}
		as desired.
	\end{proof}

         The following result follows from Theorem \ref{thm:the-wiener-index-of-Gamma-G} and Theorem \ref{thm:group-of-prime-order-in-gg}.  Note that Theorem \ref{thm:group-of-prime-order-in-gg} implies that if $n$ is prime, then every non-identity element of a group $G$ is a generator of $G$. Thus, if $S$ is the set of all generators of $G$, then $n$ can be expressed as $n=1+|S|$.

      \begin{cor}
		\label{cor:the-wiener-index-of-Gamma-G-of-prime-order}
		Let $G$ be a nontrivial cyclic group of prime order $p$. If $\Gamma(G)$ is the generator graph of $G$, then the Wiener index of $\Gamma(G)$ is given by
		\begin{align*}
			W(\Gamma(G)) = \frac{1}{2}p(p-1).
		\end{align*}
	\end{cor}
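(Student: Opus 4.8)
The plan is to specialize Theorem~\ref{thm:the-wiener-index-of-Gamma-G} to the case where $n=p$ is prime, using the observation recorded just before the corollary. By Theorem~\ref{thm:group-of-prime-order-in-gg}, when the order is prime every non-identity element generates $G$, so the set of generators $S$ consists of all $p-1$ non-identity elements; that is, $|S| = p-1$, equivalently $p = 1 + |S|$.

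The main step is a direct substitution. I would plug $n = p$ and $|S| = p-1$ into the formula
\begin{align*}
    W(\Gamma(G)) = \frac{1}{2}|S|^2 - \frac{1}{2}(2n-1)|S| + n^2 - n
\end{align*}
and simplify. Writing $|S| = p-1$ gives $\tfrac12(p-1)^2 - \tfrac12(2p-1)(p-1) + p^2 - p$; expanding the first two terms yields $\tfrac12\big[(p-1)^2 - (2p-1)(p-1)\big] = \tfrac12(p-1)\big[(p-1) - (2p-1)\big] = \tfrac12(p-1)(-p) = -\tfrac12 p(p-1)$, and adding $p^2 - p = p(p-1)$ leaves $\tfrac12 p(p-1)$, as claimed.

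Alternatively, and perhaps more transparently, I would note that by Theorem~\ref{thm:group-of-prime-order-in-gg} the generator graph $\Gamma(G)$ is the complete graph $K_p$, so $d_{\Gamma(G)}(u,v) = 1$ for every pair of distinct vertices and hence $W(\Gamma(G)) = \binom{p}{2} = \tfrac12 p(p-1)$ immediately. I would probably present the substitution argument as the official proof (to honor the stated derivation "from Theorem~\ref{thm:the-wiener-index-of-Gamma-G} and Theorem~\ref{thm:group-of-prime-order-in-gg}") and perhaps mention the complete-graph shortcut as a sanity check.

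There is essentially no obstacle here: the corollary is a one-line algebraic consequence of an already-proved theorem, so the only thing to be careful about is the arithmetic in the simplification and making sure the identification $|S| = p-1$ is stated cleanly. The "hard part," such as it is, was already done in proving Theorem~\ref{thm:the-wiener-index-of-Gamma-G}.
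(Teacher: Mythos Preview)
Your proposal is correct and matches the paper's approach exactly: the paper presents this corollary without a written-out proof, stating only that it follows from Theorem~\ref{thm:the-wiener-index-of-Gamma-G} and Theorem~\ref{thm:group-of-prime-order-in-gg} via the identification $n = 1 + |S|$, which is precisely the substitution you carry out. Your arithmetic is right, and the complete-graph shortcut you mention is a perfectly good sanity check.
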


	\subsection{The Gutman Index of the Generator Graph} 
		
		The Gutman index, {introduced in \cite{Mazorodze2014GutmanIndex}}, can be thought of as an extension of the Wiener index. It is one of the topological indices that are both distance-based and degree-based.  Let $\Gamma$ be a connected graph. The \emph{Gutman index} of $\Gamma$, denoted $Gut(\Gamma)$, is  
		\begin{align*}
			Gut(\Gamma) = \displaystyle \sum\limits_{ \{u,v\} \subseteq V(\Gamma) } \deg_\Gamma (u) \deg_\Gamma (v) d_\Gamma (u,v).
		\end{align*}
		
		In this subsection, we present the explicit formula for the Gutman index of the generator graph $\Gamma(G)$ of any nontrivial cyclic group $G$ with $|S| \ge 2$ generators.

			\begin{thm}
			\label{thm:the-gutman-index-of-Gamma-G}
			Let $G$ be a nontrivial {cyclic} group of order $n$, and $S$ the set of all generators of $G$. If $\Gamma(G)$ is the generator graph of $G$, then the Gutman index of $\Gamma(G)$ is given by
			\begin{align*}
				Gut(\Gamma(G)) = \frac{1}{2}|S|(|S|-1)(n-1)^2 + |S|^2(n-|S|)(2n-|S|-2).
			\end{align*}
		\end{thm}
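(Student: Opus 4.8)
The plan is to run the same argument as in the proof of Theorem \ref{thm:the-wiener-index-of-Gamma-G}, except that each unordered pair $\{u,v\}$ is now weighted by the product $\deg_{\Gamma(G)}(u)\deg_{\Gamma(G)}(v)$. Write $\Gamma=\Gamma(G)$ and recall from Theorem \ref{thm:generator-graph-is-the-join-of-complete-graphs} that $\Gamma=K_{|S|}\vee\overline{K_{n-|S|}}$; let $A$ be the $|S|$ vertices coming from $K_{|S|}$ (the generators) and $B$ the $n-|S|$ vertices coming from $\overline{K_{n-|S|}}$ (the non-generators). By Theorem \ref{thm:degree-of-a-vertex-in-gg}, every vertex of $A$ has degree $n-1$ and every vertex of $B$ has degree $|S|$. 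For the distances: $A$ is a clique, so $d_\Gamma(u,v)=1$ for $u,v\in A$; $\Gamma$ is a join, so $d_\Gamma(u,v)=1$ whenever $u\in A$ and $v\in B$; and for $u,v\in B$ we have $uv\notin E(\Gamma)$ while $u$ and $v$ share a common neighbour in $A$ (this is where $|S|\ge 2$, or even just $|S|\ge 1$, is used), so $d_\Gamma(u,v)=2$. This is consistent with $\diam(\Gamma)\le 2$ from Corollary \ref{cor:diameter-of-the-generator-graph}.

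Next I would split the sum defining $Gut(\Gamma)$ according to the three mutually exclusive and exhaustive types of unordered vertex pairs. There are $\binom{|S|}{2}$ pairs with both endpoints in $A$, each contributing $(n-1)(n-1)\cdot 1$; there are $|S|(n-|S|)$ pairs with one endpoint in $A$ and one in $B$, each contributing $(n-1)\,|S|\cdot 1$; and there are $\binom{n-|S|}{2}$ pairs with both endpoints in $B$, each contributing $|S|\cdot|S|\cdot 2$. Summing gives
\begin{align*}
Gut(\Gamma(G)) &= \binom{|S|}{2}(n-1)^2 + |S|^2(n-|S|)(n-1) + 2|S|^2\binom{n-|S|}{2} \\
&= \frac{1}{2}|S|(|S|-1)(n-1)^2 + |S|^2(n-|S|)\big[(n-1)+(n-|S|-1)\big] \\
&= \frac{1}{2}|S|(|S|-1)(n-1)^2 + |S|^2(n-|S|)(2n-|S|-2),
\end{align*}
where the middle step uses $2\binom{n-|S|}{2}=(n-|S|)(n-|S|-1)$ and factors $|S|^2(n-|S|)$ out of the last two terms. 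This is exactly the claimed formula.

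The main obstacle here is not conceptual: it is purely organizational, namely making sure the three classes of vertex pairs are disjoint and cover all of $\binom{V(\Gamma)}{2}$, and that the correct degree and distance data are attached to each class; the final simplification is elementary. (One may also note the special case $n=p$ prime, where $|S|=p-1$ and the formula collapses, giving the analogue of Corollary \ref{cor:the-wiener-index-of-Gamma-G-of-prime-order}.)
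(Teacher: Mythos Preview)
Your proposal is correct and follows essentially the same approach as the paper: both proofs partition the unordered vertex pairs into the three classes $A$--$A$, $A$--$B$, $B$--$B$, read off the degree products and distances from Theorems~\ref{thm:degree-of-a-vertex-in-gg} and~\ref{thm:generator-graph-is-the-join-of-complete-graphs}, and then collapse the last two terms by factoring out $|S|^2(n-|S|)$. The only cosmetic difference is that the paper organizes the split via the distance sets $D_1$ and $D_2$ before subdividing $D_1$, whereas you go directly to the three pair types.
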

		
		\begin{proof}
			Let $\Gamma(G)$ be the generator graph of a nontrivial {cyclic} group of order $n$ whose set of generators is denoted by $S$. By Theorem \ref{thm:generator-graph-is-the-join-of-complete-graphs}, $\Gamma(G) = K_{|S|} \vee\overline{ K_{n-|S|}}$, and by Theorem \ref{thm:degree-of-a-vertex-in-gg}, 
			\begin{align*}
				\deg_{\Gamma(G)} (x) = \begin{cases}
					n-1 & \text{ if } x \in S,\\
					|S| & \text{ if } x \notin S.
				\end{cases}
			\end{align*}
			The Gutman index of $\Gamma(G)$ is
			\begin{align*}
				Gut(\Gamma(G)) =& \sum\limits_{\{x,y\}\subseteq{V(\Gamma(G)}}\deg_{\Gamma(G)}(x)\deg_{\Gamma(G)}(y)d_{\Gamma(G)}(x,y) \\
				=&\sum\limits_{\{x,y\}\in{D_1(x,y)}}\deg_{\Gamma(G)}(x)\deg_{\Gamma(G)}(y)d_{\Gamma(G)}(x,y) \\
				&+ \sum\limits_{\{x,y\}\in{D_2(x,y)}}\deg_{\Gamma(G)}(x)\deg_{\Gamma(G)}(y)d_{\Gamma(G)}(x,y) \\
				=&\sum\limits_{\substack{ \{x,y\} \in D_1(x,y) \\ x,y \in S}}\deg_{\Gamma(G)}(x)\deg_{\Gamma(G)}(y)d_{\Gamma(G)}(x,y)\\
				& + \sum\limits_{\substack{ \{x,y\} \in D_1(x,y) \\ x \in S \\ y \in V(\Gamma)\setminus S  }}\deg_{\Gamma(G)}(x)\deg_{\Gamma(G)}(y)d_{\Gamma(G)}(x,y)\\\\
				& + \sum\limits_{\substack{ \{x,y\} \in D_2(x,y) \\ x,y \in V(\Gamma) \setminus S }}\deg_{\Gamma(G)}(x)\deg_{\Gamma(G)}(y)d_{\Gamma(G)}(x,y)\\ \\
				=&\binom{|S|}{2}(n-1)^2 +|S|^2(n-|S|)(n-1) + \binom{n-|S|}{2}|S|^2\cdot2\\
				=&\frac{|S|(|S|-1)}{2}(n-1)^2 + |S|^2(n-|S|)(n-1) +|S|^2 (n-|S|)(n-|S|-1)\\
				=&\frac{1}{2}|S|(|S|-1)(n-1)^2 + |S|^2(n-|S|)(2n-|S|-2),
			\end{align*}
			as stated.
		\end{proof}

            The following result is a special case of Theorem \ref{thm:the-gutman-index-of-Gamma-G}, obtained when the order of the group $G$ is prime.
        \begin{cor}
			\label{cor:the-gutman-index-of-Gamma-G-of-prime-order}
			Let $G$ be a nontrivial {cyclic} group of prime order $p$. If $\Gamma(G)$ is the generator graph of $G$, then the Gutman index of $\Gamma(G)$ is given by
			\begin{align*}
				Gut(\Gamma(G)) = \frac{1}{2}p(p-1)^3.
			\end{align*}
		\end{cor}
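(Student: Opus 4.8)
The plan is to treat this as a direct specialization of Theorem~\ref{thm:the-gutman-index-of-Gamma-G}. By Theorem~\ref{thm:group-of-prime-order-in-gg}, when $n$ is prime, the generator graph $\Gamma(G)$ is complete; equivalently, every non-identity element of $G$ is a generator, so $|S| = p - 1$. This is the only input needed beyond the general Gutman index formula.

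First I would recall the formula from Theorem~\ref{thm:the-gutman-index-of-Gamma-G}:
\begin{align*}
    Gut(\Gamma(G)) = \frac{1}{2}|S|(|S|-1)(n-1)^2 + |S|^2(n-|S|)(2n-|S|-2),
\end{align*}
and then substitute $n = p$ and $|S| = p-1$. The second term contains the factor $n - |S| = p - (p-1) = 1$ and also the factor $2n - |S| - 2 = 2p - (p-1) - 2 = p - 1$, so the entire second summand becomes $(p-1)^2 \cdot 1 \cdot (p-1) = (p-1)^3$. The first term becomes $\frac{1}{2}(p-1)(p-2)(p-1)^2 = \frac{1}{2}(p-1)^3(p-2)$.

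Then I would add the two pieces:
\begin{align*}
    Gut(\Gamma(G)) = \frac{1}{2}(p-1)^3(p-2) + (p-1)^3 = (p-1)^3\left(\frac{p-2}{2} + 1\right) = (p-1)^3 \cdot \frac{p}{2} = \frac{1}{2}p(p-1)^3,
\end{align*}
which is exactly the claimed value. As a sanity check, one can also verify this directly: when $\Gamma(G) = K_p$, every vertex has degree $p-1$ and every pair is at distance $1$, so $Gut(K_p) = \binom{p}{2}(p-1)^2 = \frac{p(p-1)}{2}(p-1)^2 = \frac{1}{2}p(p-1)^3$, in agreement.

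There is no real obstacle here — the entire argument is the observation $|S| = p-1$ together with elementary algebra, and the only thing to be careful about is confirming that the hypothesis $|S| \ge 2$ needed for Theorem~\ref{thm:the-gutman-index-of-Gamma-G} holds, which it does for every prime $p \ge 3$; the case $p = 2$ can either be excluded or checked separately (there $\Gamma(G) = K_2$ and $Gut = 1 = \frac{1}{2}\cdot 2 \cdot 1^3$, so the formula still holds).
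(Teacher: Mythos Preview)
Your proposal is correct and matches the paper's approach: the paper presents this corollary simply as ``a special case of Theorem~\ref{thm:the-gutman-index-of-Gamma-G}, obtained when the order of the group $G$ is prime,'' without writing out the algebra, and your argument is exactly this substitution $n=p$, $|S|=p-1$ followed by simplification. Your added sanity check via the direct $K_p$ computation and the remark on $p=2$ are bonuses not present in the paper.
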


	\subsection{The Harmonic Index of the Generator Graph}
		
		In this section we present the explicit formula for the harmonic index {~\cite{Zhong2012Harmonic}} of the generator graph $\Gamma(G)$ of a nontrivial cyclic group $G$ with $|S| \ge 2$ generators.  {Let $\Gamma$ be a connected graph with vertex set $V(\Gamma)$ and edge set $E(\Gamma)$. 
			The \emph{harmonic index} of $\Gamma$, denoted by $H(\Gamma)$, is defined as
			\[ H(\Gamma) = \sum_{uv \in E(G)} \frac{2}{\deg_\Gamma(u) + \deg_\Gamma(v)},\]
			where $\deg_\Gamma(u)$ and $\deg_\Gamma(v)$ are the degrees of the vertices $u$ and $v$, respectively.}

		\begin{thm}
			\label{thm:the-harmonic-index-of-Gamma-G}
			Let $G$ be a nontrivial cyclic group of order $n$, and $S$ the set of all generators of $G$. If $\Gamma(G)$ is the generator graph of $G$, then the Harmonic index of $\Gamma(G)$ is given by
			\begin{align*}
				H(\Gamma(G)) = \frac{|S|(|S|-1)}{2(n-1)} + (n-|S|) \left[ \frac{(n-1)^2 + 3|S|^2}{2|S|(n+|S|-1)} \right].
			\end{align*}
		\end{thm}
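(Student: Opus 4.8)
The plan is to compute $H(\Gamma(G))$ directly from the edge-partition of $\Gamma(G) = K_{|S|} \vee \overline{K_{n-|S|}}$, exactly as was done for the Wiener and Gutman indices. First I would recall from Theorem~\ref{thm:degree-of-a-vertex-in-gg} that every vertex in $S$ has degree $n-1$ and every vertex outside $S$ has degree $|S|$, and from Theorem~\ref{thm:generator-graph-is-the-join-of-complete-graphs} that the edges of $\Gamma(G)$ split into two types: the $\binom{|S|}{2}$ edges joining two vertices of $S$, and the $|S|(n-|S|)$ edges joining a vertex of $S$ to a vertex of $V(\Gamma(G))\setminus S$. There are no edges inside $V(\Gamma(G))\setminus S$ since that part is a null graph.

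Next I would evaluate the harmonic-index sum over each of the two edge classes. For an edge $uv$ with both endpoints in $S$, we have $\deg(u)+\deg(v) = 2(n-1)$, contributing $\frac{2}{2(n-1)} = \frac{1}{n-1}$ per edge, for a total of $\binom{|S|}{2}\cdot\frac{1}{n-1} = \frac{|S|(|S|-1)}{2(n-1)}$. For an edge $uv$ with $u \in S$ and $v \notin S$, we have $\deg(u)+\deg(v) = (n-1)+|S| = n+|S|-1$, contributing $\frac{2}{n+|S|-1}$ per edge, for a total of $|S|(n-|S|)\cdot\frac{2}{n+|S|-1}$. Adding the two pieces gives
\[
H(\Gamma(G)) = \frac{|S|(|S|-1)}{2(n-1)} + \frac{2|S|(n-|S|)}{n+|S|-1}.
\]
The remaining step is purely algebraic: I would show that $\frac{2|S|(n-|S|)}{n+|S|-1}$ equals $(n-|S|)\left[\frac{(n-1)^2 + 3|S|^2}{2|S|(n+|S|-1)}\right]$, i.e. that $\frac{2|S|}{1} = \frac{(n-1)^2 + 3|S|^2}{2|S|}$ after cancelling the common factor $(n-|S|)/(n+|S|-1)$ --- equivalently that $4|S|^2 = (n-1)^2 + 3|S|^2$, i.e. $|S|^2 = (n-1)^2$, i.e. $|S| = n-1$.

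Here is where I anticipate the main obstacle: the identity $4|S|^2 = (n-1)^2 + 3|S|^2$ is \emph{false} in general --- it holds precisely when $|S| = n-1$, that is, when $n$ is prime (by Theorem~\ref{thm:group-of-prime-order-in-gg}, every non-identity element generates $G$). So either the stated formula is simply incorrect for composite $n$, or I am mis-reading the intended grouping of the right-hand side. Assuming the formula as printed is what the authors want to prove, the honest resolution is that the clean closed form $H(\Gamma(G)) = \frac{|S|(|S|-1)}{2(n-1)} + \frac{2|S|(n-|S|)}{n+|S|-1}$ is the correct general answer, and the printed expression with the bracketed term must be read so as to be algebraically equivalent to this; I would therefore present the edge-class computation above as the core of the proof and then perform the (short) algebraic manipulation to massage $\frac{2|S|(n-|S|)}{n+|S|-1}$ into whatever grouped form matches the statement, flagging for the authors that the substitution $|S|=n-1$ should be checked if the bracketed numerator really is $(n-1)^2+3|S|^2$. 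The computation of the two edge-sums is entirely routine; the only genuine care needed is correctly identifying that the null part $\overline{K_{n-|S|}}$ contributes no edges, so that the edge set is exhausted by the two classes above.
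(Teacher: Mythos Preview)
Your edge-class computation is correct, and so is your suspicion. The discrepancy you found is real: with the Harmonic index defined (as the paper does) by a sum over \emph{edges}, the answer is
\[
H(\Gamma(G)) = \frac{|S|(|S|-1)}{2(n-1)} + \frac{2|S|(n-|S|)}{n+|S|-1},
\]
which does not agree with the stated formula unless $|S|=n-1$.

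What the paper's own proof actually does, however, is sum $\dfrac{2}{\deg(u)+\deg(v)}$ over \emph{all unordered pairs} $\{u,v\}\subseteq V(\Gamma(G))$, not just over edges. In particular it includes a third block of ``pairs'' with both $u,v\in V(\Gamma(G))\setminus S$, contributing
\[
\binom{n-|S|}{2}\cdot\frac{2}{2|S|}=\frac{(n-|S|)(n-|S|-1)}{2|S|},
\]
even though $\overline{K_{n-|S|}}$ has no edges. Adding this spurious term to your expression and combining over a common denominator is exactly what produces the bracketed factor $\dfrac{(n-1)^2+3|S|^2}{2|S|(n+|S|-1)}$ via the identity $(n-|S|-1)(n+|S|-1)=(n-1)^2-|S|^2$.

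So your proof is the right one for the Harmonic index as defined; the paper's proof matches its stated formula only because it silently replaced the edge-sum by a vertex-pair sum. You should not try to ``massage'' your second term into the bracketed form --- it cannot be done in general --- but rather point out that the theorem as printed is inconsistent with the definition given, and that the missing piece is precisely the non-edge contribution from $\overline{K_{n-|S|}}$.
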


		\begin{proof}
			Let $\Gamma(G)$ be the generator graph of $G$ of a nontrivial cyclic group of order $n$, and let $S$ be the set of all generators of $G$. By Theorem \ref{thm:degree-of-a-vertex-in-gg}, 
			\begin{align*}
				\deg_{\Gamma(G)} (x) = \begin{cases}
					n-1 & \text{ if } x \in S,\\
					|S| & \text{ if } x \notin S,
				\end{cases}
			\end{align*}
            and by Theorem \ref{thm:generator-graph-is-the-join-of-complete-graphs}, $\Gamma(G) = K_{|S|} \vee\overline{ K_{n-|S|}}$.
            Thus, the harmonic index of $\Gamma(G)$ is 
			\begin{align*}
				H(\Gamma(G)) =& \sum\limits_{u,v \in V(\Gamma(G))} \frac{2}{\deg_\Gamma (u) + \deg_\Gamma (v)}\\
				=& \sum\limits_{u,v \in S} \frac{2}{\deg_\Gamma (u) + \deg_\Gamma (v)} + \sum\limits_{u,v \in V(\Gamma(G))\setminus S } \frac{2}{\deg_\Gamma (u) + \deg_\Gamma (v)}\\
				&+\sum\limits_{\substack{u \in S  \\ v \in V(\Gamma(G))\setminus S }} \frac{2}{\deg_\Gamma (u) + \deg_\Gamma (v)}\\
				=&\binom{|S|}{2} \left[\frac{2}{n-1 + n-1} \right] + \binom{n-|S|}{2} \left[\frac{2}{|S| + |S|} \right] + |S|(n-|S|) \left[\frac{2}{n-1+|S|}\right]\\
				=&\frac{|S|(|S|-1)}{2(n-1)} + \frac{(n-|S|)(n-|S|-1)}{2|S|} + \frac{2|S|(n-|S|)}{n+|S|-1}\\
				=&\frac{|S|(|S|-1)}{2(n-1)} + (n-|S|) \left[ \frac{n-|S|-1}{2|S|} + \frac{2|S|}{n+|S|-1} \right]\\
				=& \frac{|S|(|S|-1)}{2(n-1)} + (n-|S|) \left[ \frac{(n-|S|-1)(n+|S|-1) + 4|S|^2}{2|S|(n+|S|-1)} \right]\\
				=& \frac{|S|(|S|-1)}{2(n-1)} + (n-|S|) \left[ \frac{(n-1)^2 + 3|S|^2}{2|S|(n+|S|-1)} \right],
			\end{align*}
			as stated.
		\end{proof}
	
            We now present a corollary, which follows from Theorem \ref{thm:the-harmonic-index-of-Gamma-G} by considering the case where the order of group $G$ is prime.

        \begin{cor}
			\label{cor:the-harmonic-index-of-Gamma-G-of-prime-order}
			Let $G$ be a nontrivial {cyclic} group of prime order $p$. If $\Gamma(G)$ is the generator graph of $G$, then the Harmonic index of $\Gamma(G)$ is given by
			\begin{align*}
				H(\Gamma(G)) = \frac{p}{2}.
			\end{align*}
		\end{cor}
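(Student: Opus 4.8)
The plan is to specialize Theorem \ref{thm:the-harmonic-index-of-Gamma-G} to the prime case. By Theorem \ref{thm:group-of-prime-order-in-gg}, when $n=p$ is prime the generator graph $\Gamma(G)$ is complete, so every non-identity element of $G$ is a generator; hence $|S| = p-1$, equivalently $n = 1 + |S|$. First I would substitute $n = p$ and $|S| = p-1$ into the formula
$$H(\Gamma(G)) = \frac{|S|(|S|-1)}{2(n-1)} + (n-|S|)\left[\frac{(n-1)^2 + 3|S|^2}{2|S|(n+|S|-1)}\right].$$

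The first summand becomes $\frac{(p-1)(p-2)}{2(p-1)} = \frac{p-2}{2}$. For the second summand, note that $n - |S| = 1$, that $n + |S| - 1 = 2(p-1)$, and that $(n-1)^2 + 3|S|^2 = (p-1)^2 + 3(p-1)^2 = 4(p-1)^2$, so the bracketed quantity equals $\frac{4(p-1)^2}{2(p-1)\cdot 2(p-1)} = 1$. Adding the two summands gives $H(\Gamma(G)) = \frac{p-2}{2} + 1 = \frac{p}{2}$, as claimed.

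There is no genuine obstacle here; the only point requiring a moment's care is the collapse $(n-1)^2 + 3|S|^2 = 4(n-1)^2$ at $|S| = n-1$, after which everything cancels cleanly. As a sanity check — and as an alternative route that bypasses Theorem \ref{thm:the-harmonic-index-of-Gamma-G} altogether — one may argue directly: since $\Gamma(G) = K_p$, every edge $uv$ satisfies $\deg_{\Gamma(G)}(u) = \deg_{\Gamma(G)}(v) = p-1$, and $\Gamma(G)$ has $\binom{p}{2}$ edges, so $H(\Gamma(G)) = \binom{p}{2}\cdot\frac{2}{2(p-1)} = \frac{p(p-1)}{2}\cdot\frac{1}{p-1} = \frac{p}{2}$. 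I would likely present the substitution argument as the main proof and mention the direct computation as a remark or omit it for brevity.
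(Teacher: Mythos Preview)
Your proposal is correct and matches the paper's intended route: the paper simply states that the corollary follows from Theorem~\ref{thm:the-harmonic-index-of-Gamma-G} by taking $n$ prime (so $|S|=n-1$), and your substitution carries this out explicitly. The direct $K_p$ computation you add is a nice independent check but is not needed for the argument.
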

        
		\subsection{The Randić Index of the Generator Graph.}
		In this subsection we present the explicit formula for the Randić index ~\cite{randic1975characterization} of the generator graph $\Gamma(G)$ of a nontrivial cyclic group $G$ with $|S| \ge 2$ generators. The \emph{Randić Index} is a well-known topological index in mathematical chemistry, introduced by Milan Randić in 1975. It is defined for a simple connected graph {$\Gamma = (V(\Gamma), E(\Gamma))$} as
		\[ R(\Gamma) = \sum_{uv \in E(\Gamma)} \frac{1}{\sqrt{\deg_\Gamma(u) \cdot \deg_\Gamma(v)}},\] 
		where $\deg_\Gamma(u)$ and $\deg_\Gamma(v)$ denote the degrees of the vertices $u$ and $v$ in $\Gamma$, {respectively}. The Randić Index has been widely used in the study of molecular branching and quantitative structure–activity relationships (QSAR).
		
		\begin{thm}
			\label{thm:the-randić-index-of-Gamma-G}
			Let $G$ be a nontrivial cyclic group of order $n$, and $S$ the set of all generators of $G$. If $\Gamma(G)$ is the generator graph of $G$, then the Randić index of $\Gamma(G)$ is given by
			\begin{align*}
				R(\Gamma(G)) = \frac{1}{n-1}\left(\frac{1}{2}|S|(|S|-1) +  (n-|S|)\sqrt{(n-1)|S|}\right).
			\end{align*}
		\end{thm}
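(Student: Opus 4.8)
The plan is to compute $R(\Gamma(G))$ directly from the edge-partition of $\Gamma(G)$, exactly as was done for the harmonic index. By Theorem \ref{thm:generator-graph-is-the-join-of-complete-graphs}, $\Gamma(G) = K_{|S|} \vee \overline{K_{n-|S|}}$, and by Theorem \ref{thm:degree-of-a-vertex-in-gg} each vertex in $S$ has degree $n-1$ while each vertex outside $S$ has degree $|S|$. The edge set therefore splits into two classes: the $\binom{|S|}{2}$ edges inside $K_{|S|}$, both of whose endpoints have degree $n-1$; and the $|S|(n-|S|)$ edges joining a vertex of $S$ to a vertex of $V(\Gamma(G))\setminus S$, with endpoint degrees $n-1$ and $|S|$. (There are no edges inside $\overline{K_{n-|S|}}$.)

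First I would write $R(\Gamma(G)) = \sum_{uv \in E(\Gamma(G))} \frac{1}{\sqrt{\deg(u)\deg(v)}}$ and substitute the two edge classes:
\begin{align*}
R(\Gamma(G)) = \binom{|S|}{2}\cdot\frac{1}{\sqrt{(n-1)(n-1)}} + |S|(n-|S|)\cdot\frac{1}{\sqrt{(n-1)|S|}}.
\end{align*}
Then I would simplify: the first term is $\frac{|S|(|S|-1)}{2(n-1)}$, and the second term is $\frac{|S|(n-|S|)}{\sqrt{(n-1)|S|}} = \frac{(n-|S|)\sqrt{|S|}}{\sqrt{n-1}} = \frac{(n-|S|)\sqrt{(n-1)|S|}}{n-1}$, after rationalizing. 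Factoring out $\frac{1}{n-1}$ from both terms gives
\begin{align*}
R(\Gamma(G)) = \frac{1}{n-1}\left(\frac{1}{2}|S|(|S|-1) + (n-|S|)\sqrt{(n-1)|S|}\right),
\end{align*}
which is the claimed formula.

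There is no real obstacle here; the only thing to be careful about is the denominator $n-1$, which requires $n \ge 2$ (and this is exactly the standing hypothesis, $G$ nontrivial cyclic, so $n \ge 2$; moreover $|S| \ge 1$ always, so all square roots are of nonnegative quantities and are well-defined). If one wants the edge partition to be nonempty in both classes one uses $|S| \ge 2$, but the formula is valid and correct even when $|S| = 1$ (i.e. $n = 2$), since then the first term vanishes. As with the preceding indices, I would follow the theorem with the prime-order corollary: when $n = p$ is prime, Theorem \ref{thm:group-of-prime-order-in-gg} gives $|S| = p-1$, so $n - |S| = 1$ and the formula collapses to $R(\Gamma(G)) = \frac{1}{p-1}\left(\frac{1}{2}(p-1)(p-2) + \sqrt{(p-1)^2}\right) = \frac{1}{2}(p-2) + 1 = \frac{p}{2}$, consistent with $\Gamma(\mathbb{Z}_p) = K_p$ having Randić index $\binom{p}{2}/(p-1) = p/2$.
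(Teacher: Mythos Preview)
Your proof is correct and follows essentially the same approach as the paper: both partition the edge set of $\Gamma(G)$ into the $\binom{|S|}{2}$ edges inside $K_{|S|}$ and the $|S|(n-|S|)$ crossing edges, substitute the degrees from Theorem~\ref{thm:degree-of-a-vertex-in-gg}, and simplify to the stated formula. Your additional remarks on the $|S|=1$ case and the prime-order corollary are fine extras but not part of the paper's proof of this theorem.
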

		
		\begin{proof}
			Let $\Gamma(G)$ be the generator graph of a nontrivial {cyclic} group of order $n$ whose set of generators is denoted by $S$. By Theorem \ref{thm:degree-of-a-vertex-in-gg},  if $x\in S$, then $\deg_{\Gamma(G)} (x) = n-1$,  and if { $x \in V(\Gamma(G)) \setminus S$ }, then $\deg_{\Gamma(G)} (x) = |S|$.
			
			The Randić index of $\Gamma(G)$ is
			\begin{align*} 
				R(\Gamma(G)) =& \sum\limits_{uv\in E(\Gamma(G)}\frac{1}{\sqrt{\deg_{\Gamma(G)}(u)\deg_{\Gamma(G)}(v)}}\\
				=& \sum\limits_{\substack{uv \in E(\Gamma(G))  \\ u,v \in S}} \frac{1}{\sqrt{\deg_{\Gamma(G)}(u)\deg_{\Gamma(G)}(v)}} + \sum\limits_{\substack{uv \in E(\Gamma(G))  \\ u \in S \\ v \in V(\Gamma(G)) \setminus S}} \frac{1}{\sqrt{\deg_{\Gamma(G)}(u)\deg_{\Gamma(G)}(v)}} \\  
				=& \frac{|S|(|S|-1)}{2 \sqrt{(n-1)^2}} + \frac{|S|(|n|-|S|)}{\sqrt{(n-1)|S|}} \\
				=& \frac{|S|(|S|-1)}{2 (n-1)} + \frac{|S|(|n|-|S|)}{\sqrt{(n-1)|S|}}\\
				=& \frac{|S|(|S|-1)}{2 (n-1)} + \frac{(|n|-|S|)\sqrt{(n-1)|S|}}{n-1}\\
				=& \frac{1}{n-1}\left(\frac{1}{2}|S|(|S|-1) +  (n-|S|)\sqrt{(n-1)|S|}\right),
			\end{align*}
			as stated.
		\end{proof}

            The next result is a direct consequence of Theorem \ref{thm:the-randić-index-of-Gamma-G} for the case where $n$ is prime.

            \begin{cor}
			\label{cor:the-randić-index-of-Gamma-G-of-prime-order}
			Let $G$ be a nontrivial {cyclic} group of prime order $p$. If $\Gamma(G)$ is the generator graph of $G$, then the Randić index of $\Gamma(G)$ is given by
			\begin{align*}
				R(\Gamma(G)) = \frac{p}{2}.
			\end{align*}
		\end{cor}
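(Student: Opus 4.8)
The plan is to obtain this as a one-line specialization of Theorem~\ref{thm:the-randić-index-of-Gamma-G}. First I would record the value of $|S|$ in the prime case: since $p$ is prime, every non-identity element of $G$ has order $p$ and therefore generates $G$ (equivalently, $\Gamma(G)$ is complete by Theorem~\ref{thm:group-of-prime-order-in-gg}), so the set of generators satisfies $|S| = p-1$; that is, $n = p = |S| + 1$. This is the only non-arithmetic point in the argument and it deserves exactly one sentence.

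Next I would substitute $n = p$ and $|S| = p-1$ into the formula
\begin{align*}
	R(\Gamma(G)) = \frac{1}{n-1}\left(\frac{1}{2}|S|(|S|-1) + (n-|S|)\sqrt{(n-1)|S|}\right).
\end{align*}
Here $n-1 = p-1$, $n-|S| = 1$, and $(n-1)|S| = (p-1)^2$, so $\sqrt{(n-1)|S|} = p-1$. The parenthesized expression becomes $\tfrac{1}{2}(p-1)(p-2) + (p-1)$; factoring out $p-1$ and cancelling it against the leading factor $\tfrac{1}{n-1} = \tfrac{1}{p-1}$ leaves $\tfrac{1}{2}(p-2) + 1 = \tfrac{p}{2}$, which is the claimed value.

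As a consistency check (worth including parenthetically, as in the preceding corollaries such as Corollary~\ref{cor:the-harmonic-index-of-Gamma-G-of-prime-order}) one can compute $R$ directly from $\Gamma(G) = K_p$: every edge joins two vertices of degree $p-1$ and there are $\binom{p}{2}$ edges, so $R(K_p) = \binom{p}{2}\cdot\frac{1}{\sqrt{(p-1)(p-1)}} = \frac{p(p-1)}{2}\cdot\frac{1}{p-1} = \frac{p}{2}$, in agreement with the substitution above.

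There is no genuine obstacle here: the result is a routine specialization of an already-proved general formula, so I would keep the written proof short — the identification $|S| = p-1$ followed by the substitution and simplification displayed above — matching the format used for the analogous prime-order corollaries in this section.
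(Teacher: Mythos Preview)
Your proposal is correct and follows exactly the approach indicated in the paper, which states the corollary as a direct consequence of Theorem~\ref{thm:the-randić-index-of-Gamma-G} for prime $n$ (using $|S|=p-1$ from Theorem~\ref{thm:group-of-prime-order-in-gg}). Your added direct computation via $\Gamma(G)=K_p$ is a nice sanity check but not needed for the argument.
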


		\subsection{The Sombor Index of the Generator Graph.}
		
		In this subsection we present the explicit formula for the Sombor index ~\cite{gutman2021geometric} of the generator graph $\Gamma(G)$ of a nontrivial cyclic group $G$ with $|S| \ge 2$ generators. The Sombor index is one of the degree-based topological indices in mathematical chemistry, and it was introduced in 2021 by Ivan Gutman \cite{gutman2021geometric}. Let $\Gamma = (V(\Gamma), E(\Gamma))$ be a simple connected graph. The \emph{Sombor index} of $\Gamma$, denoted $SO(\Gamma)$, is defined as
		\begin{align*}
			SO(\Gamma) = \displaystyle \sum\limits_{uv \in E(\Gamma)} \sqrt{(\deg_\Gamma (u))^2 + (\deg_\Gamma (v))^2},
		\end{align*}
		where $\deg_{\Gamma} (u)$ and $\deg_\Gamma (v)$ denote the degrees of the vertices $u$ in $\Gamma$ and $v$ in $\Gamma$, respectively.

		\begin{thm}
			\label{thm:the-sombor-index-of-Gamma-G}
			Let $G$ be a nontrivial cyclic group of order $n$, and $S$ the set of all generators of $G$. If $\Gamma(G)$ is the generator graph of $G$, then the Sombor index of $\Gamma(G)$ is given by
			\begin{align*}
				SO(\Gamma(G))&=\frac{\sqrt{2}}{2}|S|(|S|-1)(n-1) + |S|(n-|S|) \sqrt{(n-1)^2+|S|^2}.
			\end{align*}
		\end{thm}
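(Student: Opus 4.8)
The plan is to follow exactly the template already used for the Wiener, Gutman, harmonic, and Randi\'c indices: combine the structural description $\Gamma(G) = K_{|S|} \vee \overline{K_{n-|S|}}$ from Theorem~\ref{thm:generator-graph-is-the-join-of-complete-graphs} with the degree formula from Theorem~\ref{thm:degree-of-a-vertex-in-gg}, and then partition $E(\Gamma(G))$ according to which side of the join each endpoint lies on. Since the Sombor index is a pure edge sum, the only information needed about an edge $uv$ is the unordered pair of degrees $\{\deg_{\Gamma(G)}(u),\deg_{\Gamma(G)}(v)\}$, and by Theorem~\ref{thm:degree-of-a-vertex-in-gg} that pair is determined solely by whether $u$ and $v$ belong to the generating set $S$.

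First I would note that $E(\Gamma(G))$ splits into precisely two classes. The edges of $K_{|S|}$, i.e.\ edges joining two generators, of which there are $\binom{|S|}{2}$; for such an edge both endpoints have degree $n-1$, so it contributes $\sqrt{(n-1)^2+(n-1)^2}=\sqrt{2}\,(n-1)$. And the edges joining a generator to a non-generator, of which there are $|S|(n-|S|)$; for such an edge one endpoint has degree $n-1$ and the other has degree $|S|$, so it contributes $\sqrt{(n-1)^2+|S|^2}$. There are no edges between two non-generators because $\overline{K_{n-|S|}}$ is a null graph, so these two classes exhaust $E(\Gamma(G))$.

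Summing the contributions gives
\[
SO(\Gamma(G)) = \binom{|S|}{2}\sqrt{2}\,(n-1) + |S|(n-|S|)\sqrt{(n-1)^2+|S|^2},
\]
and rewriting $\binom{|S|}{2}=\tfrac12|S|(|S|-1)$ yields the claimed closed form $\tfrac{\sqrt{2}}{2}|S|(|S|-1)(n-1) + |S|(n-|S|)\sqrt{(n-1)^2+|S|^2}$. I do not expect any genuine obstacle; the only point requiring a little care is that the edge partition is the one induced by the join $K_{|S|}\vee\overline{K_{n-|S|}}$, which presupposes $|S|\ge 2$ as assumed in this subsection (and is guaranteed for $n\ge 3$ by Corollary~\ref{cor:degree-bounds}), while the degenerate case $n=2$, where $\Gamma(\mathbb{Z}_2)=K_2$, can be verified directly and also satisfies the formula. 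Finally, as with the previous indices, I would record the prime-order special case as a corollary by substituting $n=p$ and $|S|=p-1$, which collapses the second term's count $|S|(n-|S|)$ to $p-1$ and simplifies the expression.
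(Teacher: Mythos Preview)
Your proposal is correct and matches the paper's own proof almost line for line: both invoke the degree formula from Theorem~\ref{thm:degree-of-a-vertex-in-gg}, split $E(\Gamma(G))$ into the $\binom{|S|}{2}$ generator--generator edges and the $|S|(n-|S|)$ generator--nongenerator edges, compute each contribution, and simplify. Your additional remarks about the $n=2$ degenerate case and the prime-order corollary are consistent with what the paper does (the latter is indeed recorded as Corollary~\ref{cor:the-sombor-index-of-Gamma-G-of-prime-order}).
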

		
		\begin{proof}
			Let $\Gamma(G)$ be the generator graph of a nontrivial cyclic group of order $n$ whose set of generators is denoted by $S$. By Theorem \ref{thm:degree-of-a-vertex-in-gg},  if $v\in S$, then $\deg_{\Gamma(G)} (u) = n-1$,  and if $v \in V(\Gamma(G)) \setminus S$, then $\deg_{\Gamma(G)} (v) = |S|$. The Sombor index of $\Gamma(G)$ is
			\begin{align*}
				SO(\Gamma) &= \displaystyle \sum\limits_{uv \in E(\Gamma)} \sqrt{(\deg_{\Gamma(G)} (u))^2 + (\deg_{\Gamma(G)} (v))^2}\\
				&=\sum\limits_{\substack{uv \in E(\Gamma(G))  \\ u,v \in S}} \sqrt{(\deg_{\Gamma(G)} (u))^2 + (\deg_{\Gamma(G)} (v))^2} + \sum\limits_{\substack{uv \in E(\Gamma(G))  \\ u \in S \\ v\in V(\Gamma(G))  \setminus S}} \sqrt{(\deg_{\Gamma(G)} (u))^2 + (\deg_{\Gamma(G)} (v))^2}\\
				&=\binom{|S|}{2} \sqrt{2(n-1)^2} + |S|(n-|S|) \sqrt{(n-1)^2+|S|^2}\\
				&=\frac{|S|(|S| - 1)}{2}\cdot \sqrt{2}(n-1) + |S|(n-|S|) \sqrt{(n-1)^2+|S|^2}\\
				&=\frac{\sqrt{2}}{2}|S|(|S|-1)(n-1) + |S|(n-|S|) \sqrt{(n-1)^2+|S|^2},
			\end{align*}
			as stated.
		\end{proof}
		
		       We now present a corollary, which follows from Theorem \ref{thm:the-sombor-index-of-Gamma-G} by considering the case where the order of group $G$ is prime.

            \begin{cor}
			\label{cor:the-sombor-index-of-Gamma-G-of-prime-order}
			Let $G$ be a nontrivial {cyclic} group of prime order $p$. If $\Gamma(G)$ is the generator graph of $G$, then the Sombor index of $\Gamma(G)$ is given by
			\begin{align*}
				SO(\Gamma(G))&=\frac{\sqrt{2}}{2}p(p-1)^2.
			\end{align*}
		\end{cor}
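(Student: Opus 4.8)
The plan is to obtain this corollary as an immediate specialization of Theorem \ref{thm:the-sombor-index-of-Gamma-G}. The first step is to pin down $|S|$ when $n=p$ is prime: by Theorem \ref{thm:group-of-prime-order-in-gg} the generator graph $\Gamma(G)$ is complete, so every non-identity element of $G$ is a generator and hence $|S| = p-1$. Equivalently, since $\Gamma(G) = K_p$, one could instead compute $SO(\Gamma(G))$ directly as the number of edges $\binom{p}{2}$ times the common edge contribution $\sqrt{(p-1)^2 + (p-1)^2}$, which already yields the claimed value; but to match the pattern of the earlier corollaries I would go through the substitution.

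The second step is to substitute $n = p$ and $|S| = p-1$ into
\begin{align*}
SO(\Gamma(G)) = \frac{\sqrt{2}}{2}|S|(|S|-1)(n-1) + |S|(n-|S|)\sqrt{(n-1)^2 + |S|^2}.
\end{align*}
The first summand becomes $\frac{\sqrt{2}}{2}(p-1)(p-2)(p-1) = \frac{\sqrt{2}}{2}(p-1)^2(p-2)$. In the second summand we have $n - |S| = 1$ and $(n-1)^2 + |S|^2 = 2(p-1)^2$, so it collapses to $(p-1)\sqrt{2(p-1)^2} = \sqrt{2}(p-1)^2$.

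The third step is simply to add the two pieces:
\begin{align*}
SO(\Gamma(G)) = \frac{\sqrt{2}}{2}(p-1)^2(p-2) + \sqrt{2}(p-1)^2 = \sqrt{2}(p-1)^2\left(\frac{p-2}{2} + 1\right) = \frac{\sqrt{2}}{2}p(p-1)^2,
\end{align*}
which is the stated formula. No genuine obstacle arises here; the only point that needs a moment's attention is that, because $n$ is prime, the two possible vertex degrees $n-1$ and $|S|$ coincide, so the radical $\sqrt{(n-1)^2 + |S|^2}$ simplifies to $\sqrt{2}\,(p-1)$ rather than remaining an unsimplifiable surd — after that, the computation is routine bookkeeping.
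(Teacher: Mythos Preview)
Your proof is correct and follows exactly the approach the paper intends: the corollary is stated as an immediate specialization of Theorem~\ref{thm:the-sombor-index-of-Gamma-G} with $n=p$ and $|S|=p-1$ (the latter coming from Theorem~\ref{thm:group-of-prime-order-in-gg}), and the paper gives no further argument beyond that. Your substitution and simplification are clean and fully justified.
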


		\section{The Metric Dimension of the Generator Graph}
        \label{Sec:the-metric-dimension-of-the-generator-graph}
		

        In this section, we investigate the metric dimension of the generator graph $\Gamma(G)$ of a finite cyclic group $G$ of order $n$. The metric dimension of a graph is a fundamental parameter that quantifies its ability to uniquely distinguish all vertices based on their distances to a selected set of reference vertices, called a resolving set. A subset $W \subseteq V(\Gamma)$ of a graph $\Gamma$ is a called \emph{resolving set} of $\Gamma$ if, for any two distinct vertices $u, v \in V(\Gamma)$, their distance representations with respect to $W$ are different; that is,
        \[ r(u \mid W) = \big(d_\Gamma(u,w_1), d_\Gamma(u,w_2), \ldots, d_\Gamma(u,w_k)\big) \neq r(v \mid W), \]
        where $W = \{w_1, w_2, \ldots, w_k\}$ and { $d_\Gamma(u,w_i)$ denotes the distance between vertices $u$ and $w_i$ in $\Gamma$,  for  $1\le i \le k$. A resolving set of $\Gamma$ with the smallest cardinality is called a \emph{metric basis} of $\Gamma$. The cardinality of a metric basis of $\Gamma$ is the \emph{metric dimension} of $\Gamma$, and is denoted by $\dim(\Gamma)$}~\cite{manuel2016total}. 
        
        The metric dimension was introduced independently by Slater in 1975 in the study of optimal sensor placement in networks and by Harary and Melter in 1976 in a combinatorial context. Since then, it has been extensively investigated and generalized to numerous graph classes, with notable applications in autonomous robot navigation, network discovery, and chemical graph theory.

		\begin{lem}
			\label{lem:if-G-has-one-nongenerator-then-dim-Gamma-G-equal-n-1}
			{Let $G$ be a nontrivial cyclic group of order $n$, and $S$ the set of all generators of $G$.} If $|\overline{S}| = n- |S| =1$, then $\dim(\Gamma(G)) =n-1$.
		\end{lem}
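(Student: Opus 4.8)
The plan is to first pin down exactly which graph $\Gamma(G)$ is under the hypothesis, and then reduce the statement to the classical fact that a complete graph on $n$ vertices has metric dimension $n-1$. First I would note that $n-|S|=1$ means $\Gamma(G)$ has precisely one non-generator vertex and $|S|=n-1$; applying Theorem~\ref{thm:generator-graph-is-the-join-of-complete-graphs} gives $\Gamma(G)=K_{|S|}\vee\overline{K_{n-|S|}}=K_{n-1}\vee\overline{K_1}$, and joining a complete graph on $n-1$ vertices with a single vertex produces $K_n$. (Equivalently, $|S|=n-1$ forces $n$ to be prime, so Theorem~\ref{thm:group-of-prime-order-in-gg} already tells us $\Gamma(G)$ is complete.) Hence it suffices to prove that $\dim(K_n)=n-1$.

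For the upper bound I would take $W$ to be an arbitrary $(n-1)$-element subset of $V(\Gamma(G))$ and verify it is a resolving set. The unique vertex $v\notin W$ has representation $r(v\mid W)=(1,1,\dots,1)$, since in a complete graph it is adjacent to every vertex of $W$; on the other hand, for each $w\in W$ the representation $r(w\mid W)$ contains a $0$ in the coordinate indexed by $w$ itself. Thus the $n$ vectors $r(x\mid W)$, $x\in V(\Gamma(G))$, are pairwise distinct, so $W$ is resolving and $\dim(\Gamma(G))\le n-1$. For the lower bound I would show that any $W$ with $|W|\le n-2$ fails: then there are two distinct vertices $u,v\in V(\Gamma(G))\setminus W$, and since $\Gamma(G)$ is complete, $d_{\Gamma(G)}(u,w)=1=d_{\Gamma(G)}(v,w)$ for every $w\in W$, so $r(u\mid W)=r(v\mid W)$ and $W$ is not resolving. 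Combining the two bounds yields $\dim(\Gamma(G))=\dim(K_n)=n-1$.

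The main obstacle here is essentially cosmetic rather than mathematical: there is no hard step, only the need to justify cleanly the identification of $\Gamma(G)$ with $K_n$ from the hypothesis and to supply the short, standard argument that $\dim(K_n)=n-1$. The one place to be a little careful is the degenerate case $n=2$, where $\Gamma(G)=K_2$ has metric dimension $1=n-1$; the argument above already covers it, since the $(n-1)$-set $W$ is a single vertex and its one non-$W$ vertex gets the distinct representation $(1)$.
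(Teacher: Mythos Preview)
Your proof is correct. The approach is close in spirit to the paper's but framed differently: the paper never explicitly identifies $\Gamma(G)$ with $K_n$; instead it takes the concrete set $W=S$ for the upper bound and the single set $W^*=S\setminus\{g_1\}$ for the lower bound, observing that $g_1$ and the lone non-generator $g_1^*$ receive the same all-ones vector. Your route---first recognising via Theorem~\ref{thm:generator-graph-is-the-join-of-complete-graphs} (or Theorem~\ref{thm:group-of-prime-order-in-gg}) that the hypothesis forces $\Gamma(G)=K_{n-1}\vee\overline{K_1}=K_n$, and then proving $\dim(K_n)=n-1$---is a clean reduction, and your lower-bound step is in fact tighter than the paper's: you show that \emph{every} subset of size at most $n-2$ fails to resolve, whereas the paper exhibits only one particular non-resolving $(n-2)$-set. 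Both arguments ultimately rest on the same observation that in a complete graph any two vertices outside the candidate set have identical distance vectors.
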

		
		\begin{proof}
			Suppose that $|\overline{S}| = n - |S| =1$, where
			\begin{align*}
				S = \bigcup\limits_{i=1}^{|S|} \{g_i\} \quad \text{ and } \quad 	\overline{S} = \bigcup\limits_{i=1}^{|\overline{S}|} \{g_i^*\}
			\end{align*}
			arranged in the increasing order of indices. Let $W = S$. Then
			\begin{align*}
				r(g_1^*|W) = (\underbrace{1,\ldots,1}_{|W| }),
			\end{align*}
			so $W$ is a resolving set of $\Gamma(G)$ of cardinality $|W| = |S| =n-1$. Thus, $\dim(\Gamma(G)) \le n-1$.
			
			Let $W^*=S\setminus \{g_{1}\}$. Thus, $g_{1} \in S$ and $g_{1} \notin W^*$, so $|W^*| = n-2$. We show that $W^*$ is not a resolving set of $\Gamma(G)$. Note that $\overline{W^*} = \{g_1^*, g_{1}\}$, and
			\begin{align*}
				r(g_1^*|W^*)= (\underbrace{1,\ldots,1}_{|W^*| })=	r(g_{1}|W^*),
			\end{align*}
			implying that $W^*$ is not a resolving set of $\Gamma(G)$. Thus, $\dim(G) \ge n-1$, and consequently, $\dim(\Gamma(G)) =n-1$.
		\end{proof}

		\begin{thm}
			\label{thm:the-metric-dimension-of-the-generator-graph}
			Let $G$ be a nontrivial {cyclic} group of order $n$, and $S$ the set of all generators of $G$. If $\Gamma(G)$ is the generator graph of $G$, then the metric dimension of $\Gamma(G)$ is given by
			\begin{align*}
				\dim(\Gamma(G)) = \begin{cases}
					n-1 &\text{ if } n = |S| +1,\\
					n-2 & \text{ if } n \ge |S| +2.
				\end{cases}
			\end{align*}
		\end{thm}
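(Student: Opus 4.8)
The plan is to treat the two cases of the statement separately. The first case, $n = |S|+1$, is equivalent to $|\overline{S}| = n - |S| = 1$, and this is precisely the content of Lemma~\ref{lem:if-G-has-one-nongenerator-then-dim-Gamma-G-equal-n-1}, so nothing new is needed there. Thus I would assume $n \ge |S|+2$, that is $|\overline{S}| \ge 2$ (note also that $|S| \ge 1$, since the identity is never a generator), and I would work from Theorem~\ref{thm:generator-graph-is-the-join-of-complete-graphs}, which gives $\Gamma(G) = K_{|S|} \vee \overline{K_{n-|S|}}$. The first step is to record the distance function on this join: for distinct $x, y \in V(\Gamma(G))$ one has $d_{\Gamma(G)}(x,y) = 1$ when $x \in S$ or $y \in S$, and $d_{\Gamma(G)}(x,y) = 2$ when $x, y \in \overline{S}$ (the latter being legitimate because $|S| \ge 1$ provides a common neighbour). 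Hence, for a vertex $x$ and a set $W$, the representation $r(x \mid W)$ has a $0$ in the coordinate of $x$ (if $x \in W$), a $2$ in every coordinate of $W \cap \overline{S}$ other than $x$ when $x \in \overline{S}$, and a $1$ in all remaining coordinates.

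For the lower bound $\dim(\Gamma(G)) \ge n - 2$, I would argue that every resolving set $W$ contains at least $|S| - 1$ generators and at least $|\overline{S}| - 1$ non-generators. If $W$ missed two distinct generators $x, y$, then by the distance formula every coordinate of both $r(x \mid W)$ and $r(y \mid W)$ equals $1$, so $W$ fails to resolve $x$ and $y$; likewise, if $W$ missed two distinct non-generators $x, y$, then $r(x \mid W)$ and $r(y \mid W)$ are both $1$ on $W \cap S$ and both $2$ on $W \cap \overline{S}$, again a failure to resolve. Adding the two inequalities gives $|W| \ge (|S|-1) + (|\overline{S}|-1) = n - 2$.

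For the upper bound, I would exhibit an explicit resolving set of size $n - 2$: choose $a \in S$ and $b \in \overline{S}$ (possible since $|S| \ge 1$ and $|\overline{S}| \ge 2$) and set $W = V(\Gamma(G)) \setminus \{a, b\}$, so $|W| = n-2$. Every vertex of $W$ is separated from all other vertices of $\Gamma(G)$ by the entry $0$ in its own coordinate, so only $a$ and $b$ remain to be distinguished from each other. Since $a \in S$, the vector $r(a \mid W)$ is all ones, while $r(b \mid W)$ has a $2$ in each of the $|\overline{S}| - 1 \ge 1$ coordinates lying in $W \cap \overline{S}$; hence $r(a \mid W) \ne r(b \mid W)$, $W$ is a resolving set, and $\dim(\Gamma(G)) \le n - 2$. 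Together with the lower bound this yields $\dim(\Gamma(G)) = n - 2$.

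I do not expect a genuine obstacle here: the whole argument is bookkeeping with the two-valued distance function of the join $K_{|S|} \vee \overline{K_{n-|S|}}$, and the only place the hypothesis $|\overline{S}| \ge 2$ (rather than $|\overline{S}| = 1$) is actually used is the final separation of $a$ from $b$ --- with only one non-generator there is no coordinate in $W \cap \overline{S}$ available to create the distinguishing entry $2$, which is precisely the phenomenon that forces the jump to $n - 1$ in the other case. The main care required is to make the two exclusion arguments in the lower bound airtight, including the degenerate sub-case where $W \cap S$ or $W \cap \overline{S}$ happens to be empty.
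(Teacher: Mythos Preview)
Your argument is correct and matches the paper's approach: Lemma~\ref{lem:if-G-has-one-nongenerator-then-dim-Gamma-G-equal-n-1} handles $|\overline S|=1$, and for $|\overline S|\ge 2$ both you and the paper take the resolving set $V(\Gamma(G))\setminus\{a,b\}$ with $a\in S$, $b\in\overline S$ for the upper bound and a twin-vertex obstruction for the lower bound. Your lower-bound phrasing (every resolving set must contain at least $|S|-1$ generators and at least $|\overline S|-1$ non-generators) is in fact cleaner and more complete than the paper's, which only verifies that certain specific $(n-3)$-subsets of its chosen $W$ fail to resolve rather than treating an arbitrary $(n-3)$-set.
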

		
		\begin{proof}
			Let $\Gamma(G)$ be the generator graph of a nontrivial cyclic group $G$ of order $n$, whose set of generators is denoted by $S$. If $n = |S|+1$, then $n-|S| = |\overline{S}| =1$ and by Lemma \ref{lem:if-G-has-one-nongenerator-then-dim-Gamma-G-equal-n-1}, $\dim(\Gamma(G)) = n-1$. Hence, we may assume that $n \ge |S|+2$. If $n \ge |S|+2$, then $n-|S|=|\overline{S}|\ge 2$. Let
			\begin{align*}
				S = \bigcup\limits_{i=1}^{|S|} \{g_i\} \quad \text{ and } \quad 	\overline{S} = \bigcup\limits_{j=1}^{|\overline{S}|} \{g_j^*\},
			\end{align*}
			and let
			\begin{align*}
				W = \left( \bigcup\limits_{i=2}^{|S|} \{g_i\}  \right) \cup \left(\bigcup\limits_{j=2}^{|\overline{S}|} \{g_j^*\} \right).
			\end{align*}
			Thus, $W = (S \setminus \{g_1\}) \cup (\overline{S} \setminus \{g_1^*\}) = (S\cup \overline{S}) \setminus \{g_1, g_1^*\} = V(\Gamma(G)) \setminus \{g_1, g_1^*\}$. Observe that
			\begin{align*}
				r(g_1|W) = (\underbrace{1,\ldots, 1}_{|W|}) \ne (\underbrace{1,\ldots, 1}_{|S|-1}, \overbrace{2,\ldots, 2}^{|\overline{S}|-1} ) = r(g_1^*|W).
			\end{align*}
			Hence, $W$ is a resolving set $\Gamma(G)$. Thus, $\dim(\Gamma(G)) \le |W| = n-2$.
			
			For an integer $i$, where $2\le i \le |S|$, and an integer $j$, where $2\le j \le |\overline{S}|$, suppose that $W' = V(\Gamma(G)) \setminus \{g_1, g_1^*, g_i\}$ or $W' = V(\Gamma(G)) \setminus \{g_1, g_1^*, g_j^*\}$. If $W' = V(\Gamma(G)) \setminus \{g_1, g_1^*, g_i\}$, then 
			\begin{align*}
				r(g_1|W') = (\underbrace{1,\ldots, 1}_{|W'|}) = r(g_i|W'),
			\end{align*}
			and so $W'$ is not a resolving set of $\Gamma(G)$. On the other hand, if $W' = V(\Gamma(G)) \setminus \{g_1, g_1^*, g_j^*\}$, then
			\begin{align*}
				r(g_1^*|W') = (\underbrace{1,\ldots, 1}_{|S|-1}, \overbrace{2,\ldots, 2}^{|\overline{S}|-2} ) = r(g_j^*|W'),
			\end{align*}
			and again, $W'$ is not a resolving set of $\Gamma(G)$. Thus $\dim(\Gamma(G)) \ge |W| = n-2$. Consequently, $W$ is a metric basis of $\Gamma(G)$, and so $\dim(\Gamma(G)) = |W| = n-2$.
		\end{proof}

      The following result follows from Theorem \ref{thm:the-metric-dimension-of-the-generator-graph} and Theorem \ref{thm:group-of-prime-order-in-gg}.

        \begin{cor}
		\label{cor:the-metric-dimension-of-the-generator-graph-in-terms-of n}
		Let $G$ be a nontrivial cyclic group of order $n$, and $\Gamma(G)$  the generator graph of $G$. Then the metric dimension of $\Gamma(G)$ is given by
			\begin{align*}
				\dim(\Gamma(G)) = \begin{cases}
					n-1 &\text{ if } n \text{ is prime},\\
					n-2 & \text { otherwise}.
				\end{cases}
			\end{align*}
	   \end{cor}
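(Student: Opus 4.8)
The plan is to deduce this directly from Theorem~\ref{thm:the-metric-dimension-of-the-generator-graph} by rephrasing the case distinction ``$n=|S|+1$'' versus ``$n\ge|S|+2$'' in terms of whether $n$ is prime. First I would record the elementary observation that, since $G$ is nontrivial, its identity element is not a generator of $G$, so $|S|\le n-1$; equivalently $n\ge|S|+1$ always holds. Hence for every nontrivial cyclic group of order $n$, exactly one of the two alternatives $n=|S|+1$ and $n\ge|S|+2$ occurs, and these are precisely the two cases appearing in Theorem~\ref{thm:the-metric-dimension-of-the-generator-graph}.

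Next I would connect the equality $n=|S|+1$ with primality of $n$. By Theorem~\ref{thm:generator-graph-is-the-join-of-complete-graphs} we have $\Gamma(G)=K_{|S|}\vee\overline{K_{n-|S|}}$, which is a complete graph if and only if $n-|S|\le 1$, that is, using $|S|\le n-1$, if and only if $n=|S|+1$. On the other hand, Theorem~\ref{thm:group-of-prime-order-in-gg} asserts that $\Gamma(G)$ is complete if and only if $n$ is prime. Combining these two equivalences gives $n \text{ prime} \iff n=|S|+1$, and consequently $n \text{ not prime} \iff n\ge|S|+2$.

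Finally I would substitute these equivalences into the formula of Theorem~\ref{thm:the-metric-dimension-of-the-generator-graph}: the branch $n=|S|+1$ becomes ``$n$ is prime'' with value $n-1$, and the branch $n\ge|S|+2$ becomes ``$n$ is not prime'' with value $n-2$, which is exactly the asserted formula. Since every step is an immediate consequence of results already established, there is no genuine obstacle here; the only point worth double-checking is the bound $|S|\le n-1$, which guarantees that the case distinction is exhaustive, and this follows at once from $G$ being nontrivial.
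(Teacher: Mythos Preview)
Your proposal is correct and follows essentially the same route the paper indicates: the corollary is stated to follow from Theorem~\ref{thm:the-metric-dimension-of-the-generator-graph} and Theorem~\ref{thm:group-of-prime-order-in-gg}, and your argument simply spells out the equivalence $n=|S|+1\iff n$ prime that links the two case distinctions. The only minor difference is that you route this equivalence through Theorem~\ref{thm:generator-graph-is-the-join-of-complete-graphs} (completeness of the join), whereas one could also read it off directly from Theorem~\ref{thm:group-of-prime-order-in-gg} together with the observation that the identity is never a generator; either way the content is the same.
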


	\newpage
	\bibliographystyle{abbrv} 
	\bibliography{references}

\begin{thebibliography}{10}

\bibitem{erdem2018generating}
F.~Erdem.
\newblock On the generating graphs of symmetric groups.
\newblock {\em Journal of Group Theory}, 21(4):629--649, 2018.

\bibitem{fraleigh2003first}
J.~B. Fraleigh.
\newblock {\em A First Course in Abstract Algebra}.
\newblock Addison-Wesley, Boston, MA, 7th edition, 2003.

\bibitem{gutman2021geometric}
I.~Gutman.
\newblock Geometric approach to degree-based topological indices: Sombor
  indices.
\newblock {\em MATCH Commun. Math. Comput. Chem}, 86(1):11--16, 2021.

\bibitem{henning2013total}
M.~A. Henning and A.~Yeo.
\newblock {\em Total domination in graphs}.
\newblock Springer, 2013.

\bibitem{lucchini2017generating}
A.~Lucchini, A.~Mar{\'o}ti, and C.~M. Roney-Dougal.
\newblock On the generating graph of a simple group.
\newblock {\em Journal of the Australian Mathematical Society}, 103(1):91--103,
  2017.

\bibitem{manuel2016total}
P.~Manuel, M.~Binu, B.~Rajan, and S.~Stephen.
\newblock On the metric dimension of the total graph of a graph.
\newblock {\em Discrete Mathematics, Algorithms and Applications},
  8(01):1650017, 2016.

\bibitem{Mazorodze2014GutmanIndex}
J.~P. Mazorodze, S.~Mukwembi, and T.~Vetr{\'i}k.
\newblock On the {G}utman index and minimum degree.
\newblock {\em Discrete Applied Mathematics}, 173:77--82, 2014.

\bibitem{randic1975characterization}
M.~Randi{\'c}.
\newblock Characterization of molecular branching.
\newblock {\em Journal of the American Chemical Society}, 97(23):6609--6615,
  1975.

\bibitem{tacbobo2023generator}
T.~L. Tacbobo.
\newblock The generator graph of a group.
\newblock {\em European Journal of Pure and Applied Mathematics},
  16(3):1894--1901, 2023.

\bibitem{vasantha2009groups}
W.~Vasantha~Kandasamy and F.~Smarandache.
\newblock Groups as graphs, editura cuart.
\newblock {\em Romania}, 2009.

\bibitem{wiener1947structural}
H.~Wiener.
\newblock Structural determination of paraffin boiling points.
\newblock {\em Journal of the American chemical society}, 69(1):17--20, 1947.

\bibitem{Zhong2012Harmonic}
L.~Zhong.
\newblock The harmonic index for graphs.
\newblock {\em Applied Mathematics Letters}, 25(3):561--566, 2012.

\end{thebibliography}

\end{document}